\newtheorem{thm}{Theorem}[section]
\newtheorem{prop}[thm]{Proposition}
\newtheorem{cor}[thm]{Corollary}
\newtheorem{lem}[thm]{Lemma}
\newtheorem{rem}[thm]{Remark}
\newcommand{\C}{{\mathcal C}}
\newcommand{\D}{{\mathcal D}}
\newcommand{\E}{{\mathcal E}}
\newcommand\Rep{\operatorname{Rep}}
\newcommand\Irr{\operatorname{Irr}}
\newcommand\FPdim{\operatorname{FPdim}}
\newcommand\vect{\operatorname{Vec}}
\newcommand\SuperV{\operatorname{SuperVec}}
\newcommand\Hom{\operatorname{Hom}}
\newcommand\Pic{\operatorname{G}}
\begin{document}

\title[Tannakian subcategories]{Existence of Tannakian subcategories and its applications}
\author[dong]{Jingcheng Dong}
\address{College of Engineering, Nanjing Agricultural University, Nanjing 210031, P.R.
China}
\email[Dong]{dongjc@njau.edu.cn}
\author[Dai]{Li Dai}
\email[Dai]{daili1980@njau.edu.cn}

\keywords{solvable fusion category, group-theoretical fusion category; equivariantization; Frobenius-Perron dimension}

\subjclass[2010]{18D10; 16T05}

\date{\today}


\begin{abstract}
We study several classes of braided fusion categories, and prove that they all contain nontrivial Tannakian subcategories. As applications, we classify some fusion categories in terms of  solvability and group-theoreticality.
\end{abstract}
 \maketitle



\section{Introduction}\label{sec1}
Recall that a symmetric fusion category is called Tannakian if it is equivalent to the representation category of a finite group as a symmetric fusion category. Tannakian categories play an important role in the study and classification of fusion categories.
See
\cite{bruguieres2014central,bruillard2013classification,burciu2013fusion,etingof2011weakly,naidu2009fusion,nikshych2008non}
 for examples.

 The aim of this paper is to establish the existence of Tannakian subcategories of some braided fusion categories so as to study the classification of these fusion categories. Our first result is:

 \begin{thm}\label{thm001}
 Let $\C$ be an integral braided fusion category.

 (1)\quad (Theorem \ref{thm32}) If the Frobenius-Perron dimensions of all noninvertible simple objects have a common prime divisor $p$ then the largest pointed fusion category $\C_{pt}$ contains a nontrivial Tannakian subcategory of $\C$.

 (2)\quad (Theorem \ref{thm42}) Let $1<p<q$ be positive integers such that ${\rm gcd}(p,q)=1$, and let $\C$ be of type $(1,m;p,n;q,s)$. Then $\C$ contains a nontrivial Tannakian subcategory in any of the following cases:

(i)\quad $p^2\leq q$;

(ii)\quad $p,q$ are powers of two distinct prime numbers.
 \end{thm}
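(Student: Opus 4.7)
For part (1), the plan is to work inside the group $G$ of invertible objects of $\C$, so that $\C_{pt}\simeq\vect_G^{\omega}$; since $\C$ is braided, $G$ is abelian and the braiding on $\C_{pt}$ is encoded by a quadratic form $q:G\to k^{\times}$, with Tannakian subcategories of $\C_{pt}$ corresponding to subgroups $H\le G$ on which $q$ is trivial. I would first extract divisibility information from the class equation
\begin{equation*}
\FPdim\C \;=\; |G| \;+\; \sum_{X\text{ non-invertible simple}}(\FPdim X)^{2},
\end{equation*}
which, combined with the hypothesis $p\mid\FPdim X$ and the fact $\FPdim X\mid\FPdim\C$, forces $p\mid|G|$. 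Next, for each non-invertible simple $X$, the invertible summands of $X\otimes X^{\ast}$ are exactly the stabilizer $G_{X}=\{g\in G:g\otimes X\cong X\}$, each with multiplicity one, so from
\begin{equation*}
(\FPdim X)^{2} \;=\; |G_{X}| \;+\; \sum_{Y\text{ non-invertible}} N_{X,X^{\ast}}^{Y}\,\FPdim Y
\end{equation*}
and the divisibility hypothesis one concludes $p\mid|G_{X}|$. Using these $p$-subgroups together with the ribbon axiom applied to $g\subset X\otimes X^{\ast}$, I would exhibit a nontrivial element $g$ in some $G_X$ on which the twist $q(g)$ is trivial, producing a nontrivial Tannakian subcategory inside $\C_{pt}$.

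For part (2), the plan is to reduce to part (1) by identifying a braided fusion subcategory $\D\subseteq\C$ whose non-invertible simples share a common prime divisor of Frobenius-Perron dimensions. In case (i), if $p^{2}<q$, then no FPdim-$q$ simple can appear in $X\otimes Y$ for FPdim-$p$ simples $X,Y$, since its contribution $q$ would already exceed $\FPdim(X\otimes Y)=p^{2}$; hence the fusion subcategory $\D$ generated by $\C_{pt}$ and the FPdim-$p$ simples is closed under tensor product, is braided, and has all non-invertible simples of FPdim $p$, so part (1) applies to $\D$. If $p^{2}=q$, then any prime divisor of $p$ also divides $q$, so all non-invertible simples of $\C$ share a common prime divisor and part (1) applies directly to $\C$. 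In case (ii), where $p,q$ are powers of distinct primes, closure of $\D$ no longer follows from a size bound; instead, I would use $\gcd(p,q)=1$ and arithmetic of fusion coefficients and stabilizer orders on $X\otimes Y$ (each FPdim-$q$ summand contributes a multiple of $q$ to $\FPdim(X\otimes Y)=p^{2}$, while $\gcd(p^{2},q)=1$) to rule out FPdim-$q$ constituents of products of FPdim-$p$ simples, and symmetrically when the roles of $p$ and $q$ are swapped, again reducing to part (1).

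The main obstacle in part (1) is the final Tannakian upgrade: the divisibility argument yields a symmetric pointed subcategory, but to rule out the super-Tannakian alternative the twist $q|_H$ must be shown to vanish explicitly via the ribbon structure. In part (2), case (ii) is the most delicate: closing $\D$ under tensor product without the inequality $p^{2}\le q$ will likely require a finer arithmetic/braiding analysis to exclude FPdim-$q$ summands from tensor products of FPdim-$p$ simples, using both the coprimality of $p$ and $q$ and the constraints imposed by the braiding on the fusion coefficients.
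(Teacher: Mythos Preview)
Your approach diverges from the paper's in both parts, and two of the divergences leave real gaps.

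For part~(1), the divisibility $p\mid|G[X]|$ is correct and matches the paper. But the step you flag as the ``main obstacle'' is not merely an obstacle---it is missing. You assert that the ribbon axiom on $g\subset X\otimes X^{*}$ will produce some nontrivial $g\in G[X]$ with $\theta_g=1$, yet the balance equation only gives $\theta_g=\theta_X\theta_{X^{*}}\lambda_g$ for an eigenvalue $\lambda_g$ of the double braiding, and nothing in your outline forces this to equal $1$. The paper never attempts such a computation. Instead it runs a case analysis on the M\"uger center $\mathcal{Z}_2(\C_{pt})$: if trivial, $\C_{pt}$ is nondegenerate and $\C\cong\C_{pt}\boxtimes\C_{pt}'$, contradicting $p\mid|\Pic(\C_{pt}')|$ (Lemma~\ref{lem21} applied to $\C_{pt}'$); if of dimension $>2$, Deligne's theorem (Lemma~\ref{lem25}) supplies a Tannakian piece; if equal to $\SuperV$, the structure $\C_{pt}\cong\SuperV\boxtimes\D_0$ of slightly degenerate pointed categories reduces to $\C_{pt}\cong\SuperV$, which is then excluded because the generating fermion cannot fix any simple $X$ (Theorem~\ref{thm24}(3)), contradicting $g\in G[X]$.

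For part~(2)(ii), your closure argument does not work. With $p=3$, $q=5$ and $|G[X]|=1$, the decomposition $9=1+3+5$ is arithmetically permitted, so a FPdim-$q$ summand of $X\otimes X^{*}$ is not excluded by $\gcd(p^{2},q)=1$ alone; your parenthetical reasoning only shows $q\nmid p^{2}$, not that $q$ cannot appear as one term in a sum equal to $p^{2}$. The paper does not try to close off such a subcategory. It reduces, via the same M\"uger-center trichotomy on $\C_{pt}$, to the case $\FPdim\C_{pt}=2$ (Lemma~\ref{lem41}), and there, for the slightly degenerate centralizer of $\C_{pt}$, invokes \cite[Proposition 7.4]{etingof2011weakly}, which directly furnishes a Tannakian subcategory in any slightly degenerate integral braided category whose simple dimensions are prime powers in at most two primes. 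That external input is where the prime-power hypothesis is actually used.

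Your idea for~(2)(i), by contrast, is sound and genuinely different from the paper: since $p^{2}\le q$ with $\gcd(p,q)=1$ forces $p^{2}<q$, the full subcategory on invertibles and FPdim-$p$ simples is tensor-closed for the size reason you give, and part~(1) then applies to it. The paper instead argues inside the centralizer of $\C_{pt}$ in the $m=2$ case, using that the fermion cannot stabilize any simple while every FPdim-$p$ simple $X$ must have nontrivial $G[X]$ once FPdim-$q$ summands of $X\otimes X^{*}$ are ruled out.
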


The definition of type of a fusion category will be given in Subsection \ref{subsec21}.


Obviously, part (1) of Theorem \ref{thm001} can be applied to integral braided fusion category $\C$ of type $(1,m;s,n)$. Thus such fusion category must be a $G$-equivariantization of a fusion category $C_G$, where $\Rep(G)\subseteq \C$ a nontrivial Tannakian subcategory, and $C_G$ is the corresponding de-equivariantization.

\medbreak
In \cite{gelaki2009centers}, Gelaki, Naidu and Nikshych constructed a class of braided fusion categories of type $(1,m;p,n;q,s)$, where $p<q$ are prime numbers. They are solvable but not group-theoretical. We prove that braided fusion categories of this type are always solvable. Our second result is:

\begin{thm}\label{thm01}
(1)\quad(Corollary \ref{cor34}) Let $p$ be a prime number and let $\C$ be an integral braided fusion category such that the Frobenius-Perron dimensions of simple objects are powers of $p$. Then $\C$ is solvable.

(2)\quad(Corollary \ref{cor43}) Let $\C$ be a braided fusion category. Suppose that $\C$ is of type $(1,m;p,n;q,s)$ then $\C$ is solvable, where $p<q$ are prime numbers.
\end{thm}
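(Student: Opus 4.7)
The plan for both parts is to induct on $\FPdim(\C)$, apply Theorem \ref{thm001} to extract a nontrivial Tannakian subcategory $\E = \Rep(G)$, and reduce to a smaller braided fusion category via de-equivariantization.

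For part (1), I will first dispose of the pointed case: if $\C \cong \vect_A^\omega$ with $|A|$ a power of $p$, solvability follows because $p$-groups are solvable. Otherwise every noninvertible simple has FP dimension a positive power of $p$, so Theorem \ref{thm32} supplies a nontrivial Tannakian $\E = \Rep(G) \subseteq \C_{pt}$ with $|G|$ a power of $p$, hence $G$ solvable. Writing $\E'$ for the centralizer of $\E$ in $\C$, I use that $\E \subseteq \E'$ (as $\E$ is symmetric) lies in the M\"{u}ger center of $\E'$, so the de-equivariantization $(\E')_G$ is a braided fusion category with $\FPdim((\E')_G) = \FPdim(\E')/|G| < \FPdim(\C)$. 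Since $(\E')_G$ is integral with $p$-power FP dimension, its simple FP dimensions are also powers of $p$, and the inductive hypothesis yields its solvability. Then $\C_G$ is a $G$-crossed braided fusion category with trivial component $(\E')_G$, so it is a $G$-extension of a solvable fusion category by a solvable group and is therefore solvable; finally $\C \cong (\C_G)^G$ is a $G$-equivariantization by a solvable group, hence solvable.

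For part (2), I run the same scheme with Theorem \ref{thm42} in place of Theorem \ref{thm32}; since $p,q$ are distinct primes, case (ii) applies and yields a nontrivial Tannakian $\E = \Rep(G) \subseteq \C$. Two additional checks are required. First, $G$ is solvable: its character degrees lie in $\{1,p,q\}$ (since $\Rep(G) \hookrightarrow \C$), and any finite group with character degree set contained in $\{1,m,n\}$ for coprime $m,n$ is solvable by a classical theorem (due to Isaacs). Second, the inductive hypothesis must continue to apply to $(\E')_G$: since simples of $\E' \cong ((\E')_G)^G$ have FP dimensions of the form $\FPdim(Y) \cdot [G:G_Y] \cdot \dim(\rho)$ for $Y$ a simple of $(\E')_G$ with stabilizer $G_Y$ and $\rho$ an irreducible representation of $G_Y$, and each such product lies in $\{1,p,q\}$, the factor $\FPdim(Y)$ must itself lie in $\{1,p,q\}$. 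Consequently $(\E')_G$ is again a braided fusion category of type $(1,m';p,n';q,s')$ but of strictly smaller FP dimension, induction gives its solvability, and the $G$-extension/$G$-equivariantization argument from part (1) concludes.

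The principal obstacle will be justifying the solvability of $G$ in part (2): when $\E$ cannot be chosen inside $\C_{pt}$, $G$ need not be abelian, and the argument relies on the group-theoretic fact that finite groups whose character degree set is contained in $\{1,m,n\}$ with $\gcd(m,n)=1$ are solvable. A secondary technical point is that the trivial component of $\C_G$ is braided precisely because we pass to $\E'$ rather than all of $\C$; in general Theorem \ref{thm42} does not guarantee that $\E$ lies in the M\"{u}ger center of $\C$, and without this passage the de-equivariantization would only be $G$-crossed braided. Once both points are in place, the standard closure of solvability under $G$-extensions and $G$-equivariantizations by solvable groups closes the induction.
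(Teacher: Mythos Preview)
Your overall strategy is exactly the paper's: induct on $\FPdim(\C)$, extract a nontrivial Tannakian $\Rep(G)$ via Theorems~\ref{thm32} and~\ref{thm42}, pass to the braided neutral component of the de-equivariantization (your $(\E')_G$ is precisely what the paper calls $(\C_G)_e$), and close using stability of solvability under extensions and equivariantizations by solvable groups. Two points, however, need repair.

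\medskip
\textbf{Part (1): the claim that $(\E')_G$ has $p$-power Frobenius--Perron dimension is unfounded.} Nothing in the hypotheses forces $\FPdim(\C)$, $\FPdim(\E')$, or $\FPdim((\E')_G)$ to be a power of $p$; only the \emph{simple} dimensions are assumed to be $p$-powers. (And even if the total dimension were a $p$-power, the inference to $p$-power simple dimensions would itself require justification.) The correct argument is the one you already use in part~(2): since $\C\cong(\C_G)^G$, every simple of $\C$ has dimension $\dim\pi\cdot[G:G_Y]\cdot\FPdim(Y)$ for some simple $Y$ of $\C_G$; as this product is a power of $p$ and $\FPdim(Y)$ is a positive integer dividing it, $\FPdim(Y)$ is a power of $p$. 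This is exactly what the paper does, citing \cite[Corollary~2.13]{burciu2013fusion} (Equation~\eqref{eq7}).

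\medskip
\textbf{Part (2): the induction does not close as stated.} You assert that $(\E')_G$ is again of type $(1,m';p,n';q,s')$, but nothing prevents one of the dimensions $p$ or $q$ from disappearing after de-equivariantization; $(\E')_G$ may well be pointed, of type $(1,m';p,n')$, or of type $(1,m';q,s')$. In those degenerate cases the inductive hypothesis for part~(2) does not apply and you must invoke part~(1) instead. The paper makes this casework explicit.

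\medskip
A minor remark: for the solvability of $G$ in part~(2) the paper simply observes that the irreducible degrees of $G$ lie in $\{1,p,q\}$ with $p,q$ prime and concludes solvability without further citation; your appeal to an Isaacs-type theorem on coprime degree sets is adequate here, though the statement you quote (arbitrary coprime $m,n$) is stronger than what is needed or than what is classically attributed to Isaacs.
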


\begin{rem}
Parts (1) and (2) of Theorem \ref{thm01} were respectively obtained in \cite{natale2013weakly} and \cite{natale2014graphs} by different methods.
\end{rem}

\medbreak
For a prime number $p$, the fusion categories of type $(1,m;p,n)$ have been  studied in several settings in \cite{natale2012fusion,natale2012solvability}. Our results below generalize some of them. We pay more attention on when a fusion category is group-theoretical. Our third result is:

\begin{thm} (Theorem \ref{thm36}) Let $\C$ be an integral braided fusion category such that  the Frobenius-Perron dimensions of its simple objects are powers of a prime number $p$. If $\FPdim(\C_{pt})=p$ then $\C$ is group-theoretical. Moreover, $\C$ is of type $(1,p;p,m)$ or $(1,p)$.
\end{thm}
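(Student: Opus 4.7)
The plan is to apply Theorem~\ref{thm001}(1) and then de-equivariantize. Since every noninvertible simple object of $\C$ has Frobenius-Perron dimension a power of $p$, these dimensions share the common prime divisor $p$, so Theorem~\ref{thm001}(1) yields a nontrivial Tannakian subcategory of $\C$ inside $\C_{pt}$. Because $\FPdim(\C_{pt})=p$ is prime, this Tannakian subcategory must equal $\C_{pt}$ itself, so $\C_{pt}\simeq \Rep(\mathbb{Z}/p\mathbb{Z})$ as a symmetric fusion category. Writing $G=\mathbb{Z}/p\mathbb{Z}$ and letting $\D$ denote the corresponding de-equivariantization, we have $\C=\D^G$ and $\FPdim(\D)=\FPdim(\C)/p$.

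I would then unravel the correspondence between $\Irr(\C)$ and $\Irr(\D)$. Since $|G|$ is prime and $H^{2}(G,k^{*})=0$, every simple of $\C$ has the form $(X,\pi)$, with $X$ a representative of a $G$-orbit in $\Irr(\D)$ and $\pi$ an ordinary irreducible representation of the stabilizer $G_{X}\in\{1,G\}$; the resulting Frobenius-Perron dimension is $\FPdim(X)$ when $G_{X}=G$ and $p\cdot\FPdim(X)$ when $G_{X}=1$. The hypothesis $\FPdim(\C_{pt})=p$ forces $\mathbf{1}$ to be the unique $G$-invariant invertible of $\D$, so every other invertible of $\D$ lies in a free $G$-orbit of size $p$ and contributes exactly one simple of Frobenius-Perron dimension $p$ to $\C$.

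The crucial step is to show that $\D$ is pointed. A noninvertible simple $Y\in\D$ of Frobenius-Perron dimension $p^{k}$ with $k\geq 1$ would, upon equivariantization, produce either $p$ simples of $\C$ of Frobenius-Perron dimension $p^{k}$ lying in a single $\C_{pt}$-orbit (when $G_{Y}=G$), or a single $\C_{pt}$-invariant simple of dimension $p^{k+1}\geq p^{2}$ (when $G_{Y}=1$). I expect to rule out both configurations by combining arithmetic constraints on $\FPdim(\D_{pt})$ (which divides $\FPdim(\D)$ and is forced by the orbit count to be $\equiv 1\pmod{p}$) with a second application of Theorem~\ref{thm001}(1) to an appropriate braided subcategory arising from $\D$. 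The principal obstacle is the case $G_{Y}=G$: then the $p$ lifts of $Y$ in $\C$ are all noninvertible of the same Frobenius-Perron dimension $p^{k}$, so one cannot appeal to the FP-dimension hypothesis on $\C$ alone and must also exploit the braiding on $\C$ and its interaction with the $G$-equivariant structure.

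Once $\D$ is pointed, $\C=\D^{G}$ is the $\mathbb{Z}/p\mathbb{Z}$-equivariantization of a pointed fusion category, and hence group-theoretical. Counting then gives the type: the $p$ invertibles of $\C$ are the lifts of $\mathbf{1}\in\D$, while each of the $(\FPdim(\D)-1)/p$ free $G$-orbits on $\Irr(\D)$ contributes exactly one simple of Frobenius-Perron dimension $p$. This yields type $(1,p;p,m)$ with $m=(\FPdim(\D)-1)/p$, collapsing to $(1,p)$ in the degenerate case $\D=\vect$.
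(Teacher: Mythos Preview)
Your overall strategy matches the paper's: identify $\C_{pt}\cong\Rep(\mathbb{Z}_p)$ as Tannakian via Theorem~\ref{thm32}, de-equivariantize to $\D=\C_{\mathbb{Z}_p}$, show $\D$ is pointed, and conclude group-theoreticality from Theorem~\ref{thm26}. The type computation at the end is also essentially the paper's.

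The gap is in your ``crucial step,'' which you leave incomplete and overcomplicate. You already have everything you need. You observed that $\FPdim(\D_{pt})\equiv 1\pmod p$ (equivalently, the paper notes $\FPdim(\D)=\FPdim(\C)/p\equiv 1\pmod p$), and you know every simple of $\D$ has Frobenius--Perron dimension a power of $p$ (this follows from the formula $\FPdim S_{X,\pi}=[G:G_X]\dim\pi\,\FPdim X$ with $[G:G_X]\dim\pi$ dividing $|G|=p$). Now simply apply Lemma~\ref{lem21} to $\D$: if $\D$ had a noninvertible simple object, then $p$ would divide $\FPdim(\D_{pt})$, contradicting $\FPdim(\D_{pt})\equiv 1\pmod p$. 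That is the whole argument.

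The point you are missing is that Lemma~\ref{lem21} requires no braiding whatsoever---it is a statement about the Grothendieck ring of an arbitrary fusion category, proved from the decomposition of $Y\otimes Y^{*}$. So there is no need for a ``second application of Theorem~\ref{thm001}(1) to an appropriate braided subcategory,'' no need to worry that $\D$ is only $G$-crossed braided, and no need for a case split on whether $G_{Y}=G$ or $G_{Y}=1$. Your ``principal obstacle'' dissolves once you realize the relevant input is Lemma~\ref{lem21} rather than Theorem~\ref{thm32}.
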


In the last part of the paper, we apply above results to odd-dimensional braided fusion categories. We prove the following result.

\begin{thm} (Theorem \ref{thm51}) Let $\C$ be an integral braided fusion category. Suppose that $\FPdim(\C)$ is odd and $\FPdim(\C)<765$. Then $\C$ is solvable.
\end{thm}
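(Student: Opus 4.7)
The plan is strong induction on $\FPdim(\C)$. When $\FPdim(\C)$ has at most two distinct prime divisors, the Etingof--Nikshych--Ostrik theorem on weakly integral fusion categories of dimension $p^{a}q^{b}$ gives solvability directly, without any braided hypothesis. So the interesting case is when $\FPdim(\C)$ has at least three distinct odd prime divisors, and since $3\cdot 5\cdot 7\cdot 11=1155>765$, exactly three.

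The key arithmetic observation is that for every odd $n<765$ with three distinct prime divisors at most one prime $p$ satisfies $p^{2}\mid n$: the smallest odd integer with three distinct prime divisors and two distinct squared prime divisors is $3^{2}\cdot 5^{2}\cdot 7=1575>765$. Since in an integral braided (hence pseudo-unitary) fusion category each simple $X$ satisfies $\FPdim(X)^{2}\mid \FPdim(\C)$, this forces $\FPdim(X)\in\{1,p\}$ for the unique prime $p$ with $p^{2}\mid \FPdim(\C)$; and if no prime is squared then all simples are invertible, so $\C$ is pointed and solvable. In the remaining case every non-invertible simple of $\C$ has dimension $p$, so the common-divisor hypothesis of Theorem~\ref{thm32} is satisfied and we obtain a nontrivial Tannakian subcategory $\Rep(G)\subseteq \C_{pt}\subseteq \C$ with $G$ a finite abelian group of odd order $\geq 3$.

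Let $\C_{G}$ denote the corresponding de-equivariantization; it is an integral pseudo-unitary fusion category (both properties are preserved by de-equivariantization) with $\FPdim(\C_{G})=\FPdim(\C)/|G|$, odd and strictly smaller than $\FPdim(\C)$. Going through the short list of admissible $\FPdim(\C)$, namely $315,\,495,\,525,\,585,\,693$ and $735$, together with the possible values of $|G|\geq 3$ dividing $\FPdim(\C_{pt})$, one checks that $\FPdim(\C_{G})$ is either of the form $p^{a}q^{b}$ (so $\C_{G}$ is solvable by Etingof--Nikshych--Ostrik) or square-free with three prime factors (in which case $\FPdim(Y)^{2}\mid \FPdim(\C_{G})$ forces every simple $Y$ of $\C_{G}$ to be invertible, so $\C_{G}$ is pointed with underlying group of odd order, hence solvable by Feit--Thompson). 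Since $G$ is abelian and thus solvable, $\C\simeq (\C_{G})^{G}$ is the equivariantization of a solvable fusion category by a solvable group, and therefore $\C$ is solvable, closing the induction.

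The main obstacle is the case-by-case arithmetic verification that pins down the bound $765$: it rests on the inequality $3^{2}\cdot 5^{2}\cdot 7=1575>765$, which ensures that throughout the allowed range a single de-equivariantization always drops the dimension into a regime already settled by Etingof--Nikshych--Ostrik or by the pointed case. One must also track that pseudo-unitarity and integrality persist through de-equivariantization, so that the relation $\FPdim(Y)^{2}\mid \FPdim(\C_{G})$ remains available in the reduced category.
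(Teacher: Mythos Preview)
Your argument has a genuine gap at its central step: the claim that in an integral braided fusion category each simple $X$ satisfies $\FPdim(X)^{2}\mid\FPdim(\C)$. This divisibility holds for \emph{modular} categories, but it fails for braided categories in general; pseudo-unitarity alone does not give it. The simplest counterexamples are symmetric: for the nonabelian group $G$ of order $21$, the category $\Rep(G)$ is integral braided of odd dimension with a simple of dimension $3$, yet $9\nmid 21$. Directly among your six critical dimensions, take $G=(\mathbb{Z}_7\rtimes\mathbb{Z}_3)\times\mathbb{Z}_{25}$ of order $525=3\cdot 5^{2}\cdot 7$; then $\Rep(G)$ has type $(1,75;3,50)$, so simples of dimension $3$ occur even though $9\nmid 525$ and the only prime whose square divides $525$ is $5$. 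Hence your reduction ``all noninvertible simples have dimension $p$ for the unique prime with $p^{2}\mid\FPdim(\C)$'' is invalid, and you cannot invoke Theorem~\ref{thm32} without further work.

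What is actually available is only the weaker divisibility $\FPdim(X)\mid\FPdim(\C)$ (this is Lemma~\ref{lem54}(1), from \cite[Theorem~2.11]{etingof2011weakly}), and with that alone composite simple dimensions such as $15$ cannot be excluded. The paper therefore proceeds differently: after disposing of the $p^{a}q^{b}$ and $pqr$ cases via \cite{etingof2011weakly}, it enumerates for each of $315,495,525,585,693,735$ all types compatible with the arithmetic constraints collected in Lemma~\ref{lem54}. The surviving types (which include, e.g., $(1,75;3,50)$ for $525$ and $(1,45;15,2)$, $(1,25;5,2;15,2)$ for $495$) are then handled case by case: in each, the noninvertible simple dimensions are either powers of a single prime (Corollary~\ref{cor34}), exactly two distinct primes (Corollary~\ref{cor43}), or share a common prime divisor (Theorem~\ref{thm32} followed by de-equivariantization and induction, as in your final paragraph). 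Your endgame via equivariantization by a solvable group matches the paper's, but reaching the hypothesis of Theorem~\ref{thm32} genuinely requires the type enumeration rather than the square--divisibility shortcut.
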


\medbreak
The paper is organized as follows. In Section \ref{sec2}, we give some basic definitions and results which will be used throughout. For other basics on fusion categories, we refer the reader to \cite{drinfeld2010braided,etingof2005fusion}. In Section \ref{sec3}, we study braided fusion categories such that the Frobenius-Perron dimensions of noninvertible simple objects have a common prime divisor. In Section \ref{sec4}, we study braided fusion categories such that the Frobenius-Perron dimensions of noninvertible simple objects are two coprime integers. In the last section, we study odd-dimensional fusion categories with Frobenius-Perron dimension less than $765$.

\section{Preliminaries}\label{sec2}
\subsection{Fusion categories}\label{subsec21}
Let $k$ be an algebraically closed field of characteristic $0$. Recall that a fusion category over $k$ is a $k$-linear semisimple rigid tensor category $\C$ with finitely many isomorphism classes of simple objects, finite dimensional Hom spaces, and the unit object $\textbf{1}$ is simple. Throughout this paper, $\Irr(\C)$ denotes the set of isomorphism classes of
simple objects of $\C$, and $\Irr_{\alpha}(\C)$ denotes the set of isomorphism classes of simple objects of Frobenius-Perron dimension $\alpha$, where $\alpha$ is a positive real number.

\medbreak
A fusion subcategory of $\C$ is a full tensor subcategory $\D$ such that if $X$ is an object of $\C$ isomorphic to a direct summand of an object of $\D$, then $X$ lies in $\D$. If $\D$ is a fusion subcategory of $\C$ then the quotient $\FPdim(\C)/\FPdim(\D)$ is an algebraic integer \cite[Proposition 8.15]{etingof2005fusion}, where $\FPdim(\C)$ denotes the Frobenius-Perron dimension of $\C$. For an object $X$ of $\C$, we will use $\FPdim(X)$ to denote its Frobenius-Perron dimension.

A fusion category $\C$ is called weakly integral if the Frobenius-Perron dimension $\FPdim (\C)$ is a natural number. It is called integral if $\FPdim (X)$ is a natural number for every object $X$ of $\C$. Obviously,  if $\C$ is weakly integral then $\FPdim (X)^2$ is a natural number. By \cite[Theorem 8.33]{etingof2005fusion}, every integral fusion category is equivalent to the representation category of a finite dimensional quasi-Hopf algebra.

\medbreak
A simple object $X$ is invertible if and only if its Frobenius-Perron dimension is $1$. Let $\Pic(\C)$ denote the set of isomorphism classes of invertible simple objects of $\C$. Then $\Pic(\C)$ is a group and generates a fusion subcategory $\C_{pt}$ of $\C$. This is the unique largest pointed fusion subcategory of $\C$. Here, a fusion category is called pointed if every simple object has Frobenius-Perron dimension $1$. So the order of $\Pic(\C)$ coincides with the Frobenius-Perron dimension of $\C_{pt}$ and hence it divides $\FPdim(\C)$.

\medbreak
Let $1=d_0, d_1,\cdots, d_s$ be positive real numbers with $1 = d_0 < d_1< \cdots < d_s$, and $n_0,n_1,\cdots,n_s$ be positive integers. A fusion category $\C$ is called of type $(d_0,n_0;d_1,n_1;\cdots;d_s,n_s)$ if $n_i$ is the number of the nonisomorphic simple objects of Frobenius-Perron dimension $d_i$, for all $i = 0, \cdots, s$.

Let $X$ and $Y$ be objects of $\C$. We denote the multiplicity of $X$ in $Y$ by $m(X,Y)$. Hence $$m(X, Y)=\dim \Hom_\C(X, Y)\mbox{\,\,and\,\,} Y = \sum_{X \in \Irr
(\C)} m(X, Y) X.$$

Let $X, Y, Z$ be objects of $\C$. Then we have $m(X,Y)=m(X^*,Y^*)$ and
$$m(X,Y\otimes Z)=m(Y^*,Z\otimes X^*)=m(Y,X\otimes Z^*).$$

Let $X, Y \in \Irr(\C)$. Then for each $g \in \Pic(\C)$ we have
\begin{center}
$m(g,X\otimes Y)=\left\{ {\begin{array}{l}
 1,\mbox{\,if\,\,} Y\cong X^*\otimes g; \\
 0,\mbox{otherwise}. \\
 \end{array}} \right.$
\end{center}

In particular, $m(g,X\otimes Y)=0$ if $\FPdim X\neq \FPdim Y$. Let $X\in
\Irr(\C)$. Then, for every $g \in \Pic(\C)$, $m(g,X\otimes X^{*})>0$ if and
only if $m(g,X\otimes X^{*})= 1$ if and only if $g\otimes X\cong X$. The set of
isomorphism classes of such invertible objects will be denoted by
$G[X]$. Thus $G[X]$ is a subgroup of $\Pic(\C)$ whose order divides
$(\FPdim X)^2$ \cite[Lemma 2.2]{dong2012frobenius}. In particular, for all $X\in \Irr(\C)$, we have a
relation
\begin{equation}\label{eq1}
\begin{split}
X\otimes X^*=\bigoplus_{g\in G[X]}g\oplus\bigoplus_{Y\in \Irr(\C),\FPdim Y >1} m(Y, X\otimes X^*) Y.
\end{split}
\end{equation}

\medbreak
Actually, there is an action of the group $\Pic(\C)$ on the set $\Irr(\C)$ by left
tensor multiplication. This action preserves Frobenius-Perron dimensions. So, for every $X\in \Irr(\C)$, $G[X]$ is the stabilizer of $X$ in $\Pic(\C)$.

\begin{lem}\label{lem21}
Let $\C$ be a weakly integral fusion category such that the Frobenius-Perron dimensions of all noninvertible simple objects have a common prime divisor $p$. Then $p$ divides the order of $\Pic(\C)$.
\end{lem}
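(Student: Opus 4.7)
The plan is to apply equation \eqref{eq1} to a single noninvertible simple object of $\C$ and extract the divisibility by a dimension count. The statement is vacuous when every simple object is invertible, so I implicitly assume the existence of a noninvertible simple object $X \in \Irr(\C)$. Applying \eqref{eq1} to $X$ and taking Frobenius--Perron dimensions yields the scalar identity
$$
\FPdim(X)^2 \;=\; |G[X]| \;+\; \sum_{\substack{Y \in \Irr(\C) \\ \FPdim(Y) > 1}} m(Y, X \otimes X^*)\, \FPdim(Y).
$$

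I would then read divisibility by $p$ off both sides. Since $p \mid \FPdim(X)$, the left-hand side is divisible by $p^2$, and in particular by $p$. On the right, the hypothesis says that $p$ divides $\FPdim(Y)$ for every noninvertible simple $Y$ appearing in the sum (weak integrality ensures $\FPdim(Y)^2$ is a rational integer, so the divisibility makes sense in the ring of algebraic integers). The multiplicities $m(Y,X\otimes X^*)$ are nonnegative integers, so the whole sum is divisible by $p$ as an algebraic integer. Consequently $|G[X]|$ is divisible by $p$ as an algebraic integer, and since $|G[X]|$ is a positive rational integer, $p \mid |G[X]|$ in $\mathbb{Z}$.

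To finish, $G[X]$ is a subgroup of $\Pic(\C)$, so Lagrange's theorem yields $|G[X]| \bigm| |\Pic(\C)|$, whence $p \mid |\Pic(\C)|$. There is essentially no obstacle beyond making the count precise: \eqref{eq1}, the inclusion $G[X] \subseteq \Pic(\C)$, and the algebraic-integer divisibility bookkeeping are all part of the setup already established in Section \ref{sec2}. The only subtlety is the weakly integral (non-integral) case, which is handled by observing that $\FPdim(Y)/p$ is an algebraic integer whenever $p \mid \FPdim(Y)$, so the step ``the sum is divisible by $p$'' survives even when the individual $\FPdim(Y)$ are irrational.
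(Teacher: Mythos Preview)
Your proof is correct and follows exactly the paper's approach: apply equation \eqref{eq1} to a noninvertible simple $X$, deduce $p \mid |G[X]|$ from the dimension count, then use $G[X] \subseteq \Pic(\C)$. The paper's own proof is just a two-sentence sketch of this same argument; your version simply makes the divisibility step and the weakly integral subtlety explicit.
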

\begin{proof}
Let $X$ be a noninvertible simple object of $\C$. Then Equation \ref{eq1} shows that the order of $G[X]$ is divisible by $p$. It follows that $p$ divides the order of $\Pic(\C)$ since $G[X]$ is a subgroup of $\Pic(\C)$.
\end{proof}

It is interesting that a counterexample of the fusion category given in Lemma \ref{lem21} also contains nontrivial invertible simple objects. Also, fusion categories of the type given in the following proposition really exists. See \cite[Section 5C]{gelaki2009centers} for details.

\begin{prop}\label{pro22}
Let $1<p<q$ be positive integers such that ${\rm gcd}(p,q)=1$, and let $\C$ be a fusion category of type $(1,m;p,n;q,s)$. Then $m>1$.
\end{prop}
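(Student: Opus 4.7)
The plan is to argue by contradiction: suppose $m=1$, so that $\Pic(\C)$ is trivial and $G[X]=\{1\}$ for every simple $X$. First I would analyze $X\otimes X^*$ for a fixed $X\in \Irr_p(\C)$: by Equation \ref{eq1} this equals $\textbf{1}$ plus noninvertible summands, so if $\alpha_X$ and $\beta_X$ denote the total multiplicities of dim-$p$ and dim-$q$ summands respectively, then $p\alpha_X+q\beta_X=p^2-1$. Reducing modulo $p$ gives $q\beta_X\equiv -1\pmod p$; since $\gcd(p,q)=1$ this forces $\beta_X\not\equiv 0\pmod p$, hence $\beta_X\geq 1$. On the other hand, $q\beta_X\leq p^2-1<pq$ (using $p<q$), so $\beta_X<p<q$.

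Next I would examine $X^*\otimes X'$ for any $X'\in \Irr_p(\C)\setminus\{X\}$: here $m(\textbf{1},X^*\otimes X')=0$, so the analogous equation $p\alpha'+q\beta'=p^2$ together with the same modular argument (and the bound $\beta'<p$) yields $\beta'=0$. Thus $X^*\otimes X'$ has no dim-$q$ summand. Now for any $Y\in \Irr_q(\C)$, $X\otimes Y$ has Frobenius--Perron dimension $pq$ with no invertible summand (since $\FPdim X\neq \FPdim Y$), and the only non-negative solutions of $p\alpha+q\beta=pq$ are $(q,0)$ and $(0,p)$; call these \emph{type P} and \emph{type Q} respectively. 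By Frobenius reciprocity $m(X',X\otimes Y)=m(Y,X^*\otimes X')$, so summing over $Y\in \Irr_q(\C)$ yields $\sum_Y m(X',X\otimes Y)=\beta_X$ when $X'=X$ and $0$ otherwise. The case $X'\neq X$ then forces, for each type-P $Y$, the only dim-$p$ summand of $X\otimes Y$ to be $X$ itself, so $X\otimes Y=qX$.

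Finally, letting $a_X$ denote the number of type-P $Y\in \Irr_q(\C)$, the case $X'=X$ of the reciprocity identity reads $\beta_X=qa_X$; combined with $\beta_X<q$ this forces $a_X=0$ and $\beta_X=0$, contradicting $\beta_X\geq 1$. The main obstacle is the bookkeeping in the middle step: one must correctly identify that the off-diagonal products $X^*\otimes X'$ are dim-$q$-free, and then channel that observation, via Frobenius reciprocity, into the divisibility $q\mid\beta_X$, which then clashes with the elementary bound $\beta_X<q$ already extracted from $X\otimes X^*$.
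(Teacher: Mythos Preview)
Your argument is correct and rests on exactly the same three elementary observations the paper uses: (i) for $X\in\Irr_p(\C)$ the product $X\otimes X^*$ must contain a dim-$q$ summand (from $p\alpha+q\beta=p^2-1$); (ii) for $X'\neq X$ in $\Irr_p(\C)$ the product $X^*\otimes X'$ cannot contain one (from $p\alpha'+q\beta'=p^2$); and (iii) $X\otimes Y$ with $Y\in\Irr_q(\C)$ satisfies $p\alpha+q\beta=pq$, forcing one of two rigid shapes. The paper simply chains these directly: it fixes one $Y$ occurring in $X\otimes X^*$, shows $Y\otimes X$ is entirely dim-$p$ (your ``type~P'') yet cannot equal $qX$ because $m(X,Y\otimes X)<p^2/q<q$, hence contains some $X'\neq X$; then $m(Y,X'\otimes X^*)>0$ contradicts (ii). Your route instead establishes (ii) globally first, then sums over all $Y$ to extract the divisibility $q\mid\beta_X$. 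This is a bit more machinery for the same payoff---the paper's single-$Y$, single-$X'$ chase reaches the contradiction faster---but your packaging is perfectly valid and arguably makes the role of observation (ii) more transparent.

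One small bookkeeping slip to fix: you define $\beta_X$ as the dim-$q$ count in $X\otimes X^*$, but the Frobenius identity you invoke, $m(X',X\otimes Y)=m(Y,X^*\otimes X')$, at $X'=X$ lands in $X^*\otimes X$, and in a non-braided fusion category these two products need not have the same decomposition. The repair is trivial---run step~2 for $X^*\otimes X$ instead (the identical inequalities $1\leq\beta_X<p<q$ hold there)---but as the proposition is stated for arbitrary fusion categories you should make that adjustment.
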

\begin{proof}
Suppose on the contrary that $m=1$. Let $X$ be an element of $\Irr_p(\C)$. Then
\begin{equation}\label{eq2}
\begin{split}
X\otimes X^*=\textbf{1}\oplus\bigoplus_{X_i\in \Irr_p(\C)}m(X_i, X\otimes X^*) X_i\oplus\bigoplus_{Y_j\in \Irr_q(\C)} m(Y_j, X\otimes X^*) Y_j.
\end{split}\nonumber
\end{equation}
There must exist some $Y_j\in \Irr_q(\C)$ such that $m(Y_j, X\otimes X^*)>0$, otherwise the decomposition above can not hold true. We choose one such $Y_j$ and denote it by $Y$. Then
 \begin{equation}\label{eq3}
\begin{split}
0<m(Y, X\otimes X^*)=m(X,Y\otimes X)<\frac{p^2}{q}<q.
\end{split}
\end{equation}

We claim that $Y\otimes X$ can not contain elements of $\Irr_q(\C)$. Indeed, if $Y\otimes X$ contains $Z\in \Irr_q(\C)$ such that $m(Z,Y\otimes X)>0$ then we have an equation
\begin{equation}\label{eq4}
\begin{split}
pq=mp+nq,
\end{split}
\end{equation}
where $m,n$ are positive integers. This induces a contradiction $p(q-m)=nq$. Notice that $\textbf{1}$ can not appear in the decomposition of $Y\otimes X$ since $X$ and $Y$ are of different Frobenius-Perron dimensions. So Equation \ref{eq4} holds true.

Equation \ref{eq3} implies that, besides $X$, $Y\otimes X$ also contains other elements of
$\Irr_p(\C)$. Thus we may write
\begin{equation}\label{eq5}
\begin{split}
Y\otimes X=m(X,Y\otimes X)X\oplus\bigoplus_{X_i\neq X,X_i\in\Irr_p(\C)}m(X_i,Y\otimes X)X_i.
\end{split}
\end{equation}

We choose one $X\neq X_i\in\Irr_p(\C)$ such that $m(X_i,Y\otimes X)>0$ and denote it by $X'$. Then $m(X',Y\otimes X)=m(Y,X'\otimes X^*)>0$. Since $X'\neq X$, $\textbf{1}$ can not appear in the decomposition of $X'\otimes X^*$. It follows from the decomposition of $X'\otimes X^*$ that we have an equation $p^2=mq+np$, where $m,n$ are positive integers. Since $p<q$, this induces a contradiction that $p(p-n)=mq$.
\end{proof}

Let $H$ be a semisimple Hopf algebra over $k$. Then \cite[Lemma 11]{zhu1993finite} states that if $G(H^*)$ is trivial then $H$ has at least $4$ nonisomorphic simple modules of different dimensions. A similar result also appears in \cite[Theorem 2.2.3]{natale1999semisimple}. Our result here is an analogous in the fusion category context.

\subsection{Extensions}
Let $G$ be a finite group. A $G$-grading of $\C$ is a decomposition of $\C$ as a direct sum of full Abelian subcategories $\C=\otimes_{g\in G}\C_g$, such that $\C_g^*=\C_{g^{-1}}$ and the tensor product $\otimes:\C\otimes\C\to\C$ maps $\C_g\times \C_h$ to $\C_{gh}$. In general, $\C_g$ is not a fusion subcategory of $\C$. But the neutral component $\C_e$ is a fusion subcategory of $\C$, where $e$ is the identity element of $G$. The grading $\C=\otimes_{g\in G}\C_g$ is called faithful if $\C_g\neq0$ for all $g\in G$. We say that $\C$ is an $G$-extension of $\D$ if the grading is faithful and $\C_e=\D$. If $\C$ is a $G$-extension of a fusion category $\D$ then the Frobenius-Perron dimensions of $\C_g$ are equal for all $g\in G$, and $\FPdim (\C)=|G|\FPdim(\D)$ \cite[Proposition 8.20]{etingof2005fusion}.

\medbreak
Every fusion category $\C$ has a canonical faithful grading $\C=\otimes_{g\in \mathcal{U}(\C)}\C_g$, whose neutral component $\C_e$ equals to the adjoint subcategory $\C_{ad}$, where $\C_{ad}$ is the smallest fusion subcategory of $\C$ containing all objects $X\otimes X^*$ for all $X\in \Irr(\C)$. The group $\mathcal{U}(\C)$ is called the universal grading group of $\C$ \cite{gelaki2008nilpotent}. Any faithful grading $\C=\otimes_{g\in G}\C_g$ comes from a group epimorphism $\mathcal{U}(\C)\to G$. So $\C_e\supseteq \C_{ad}$ for any extension of $\C$ by a group $G$.

\medbreak
Suppose that $\C$ is a weakly integral fusion category. Then either $\C$ is integral or $\C$ is a $\mathbb{Z}_2$-extension of an integral fusion subcategory \cite[Theorem 3.10]{gelaki2008nilpotent}.

\subsection{Morita equivalence}
Let $\C$ be a fusion category
and let $\mathcal{M}$ be an indecomposable right $\C$-module category. Let $C^*_{\mathcal{M}}$ denote the category of $\C$-module endofunctors of $\mathcal{M}$. Then  $C^*_{\mathcal{M}}$ is a fusion category, called the dual of $\C$ with respect
to $\mathcal{M}$ \cite{etingof2005fusion,ostrik2003module}. Two fusion categories $\C$ and $\D$ are Morita equivalent if $D$ is equivalent to $C^*_{\mathcal{M}}$ for some indecomposable right $\C$-module category $\mathcal{M}$.

A fusion category $\mathcal{\C}$ is said to be (cyclically) nilpotent if there is a sequence of
fusion categories $\mathcal{\C}_0 ={\rm Vec}, \mathcal{\C}_1, \cdots, \mathcal{\C}_n =\mathcal{\C}$ and a sequence of finite (cyclic) groups $G_1,\cdots, G_n$ such that $\mathcal{\C}_i$ is obtained from $\mathcal{\C}_{i-1}$ by a $G_i$-extension. Let $\C^{(0)}=\C, \C^{(1)}=\C_{ad}$ and $\C^{(i)}=(\C^{(i-1)})_{ad}$ for
every integer $i\geq1$. It is clear that $\C$ is nilpotent if and only if the sequence $\C^{(0)}\supseteq \C^{(1)}\supseteq\cdots\supseteq \C^{(i)}\supseteq\cdots$ converges to $\vect$.

A fusion category is called group-theoretical if it is Morita equivalent to a pointed fusion category. A fusion category is called weakly group-theoretical if it is Morita equivalent to a nilpotent fusion category. A fusion category is called solvable if it is Morita equivalent to a cyclically nilpotent fusion category.

\subsection{Braided fusion categories}
A braided fusion category $\C$ is a fusion category equipped with a natural isomorphism $c_{X,Y}:X\otimes Y\to Y\otimes X$ satisfying the hexagon axioms. Let $\C$ be a braided fusion category, and $\D\subseteq \C$ be a fusion subcategory. The M\"{u}ger centralizer $\D'$ of $\D$ in $\C$ is the category of all objects $Y\in \C$ such that $c_{Y,X}c_{X,Y}=id_{X\otimes Y}$ for all $X\in \D$. The centralizer $\D'$ is again a fusion subcategory of $\C$. The M\"{u}ger center of $\C$ is the M\"{u}ger centralizer $\mathcal{Z}_2(\C):=\C'$ of $\C$ itself. A braided fusion category $\C$ is called nondegenerate if its M\"{u}ger center $\mathcal{Z}_2(\C)$ is trivial.

\medbreak
A braided fusion category is called premodular if it has a spherical structure. Any weakly integral fusion category is premodular because it is automatically pseudounitary and has a canonical spherical structure with respect to which the categorical dimensions coincide with the Frobenius-Perron dimensions \cite[Proposition 8.23, 8.24]{etingof2005fusion}. A modular category is a nondegenerate premodular category. Hence, a weakly integral fusion category is modular if and only if it is nondegenerate.

\medbreak
If $\C$ is a modular category then the universal grading group $\mathcal{U}(\C)$ is isomorphic to the group $\Pic (\C)$ \cite[Theorem 6.2]{gelaki2008nilpotent}. In particular, $\FPdim (\C_{pt})=|\mathcal{U}(\C)|$.

\begin{prop}\label{prop22-1}
Let $\C$ be a weakly integral braided fusion category such that $\C_{pt}$ is not trivial. Then $\C$ has  one of the following properties:

(1)\quad The universal grading group $\mathcal{U}(\C)$ is not trivial;

(2)\quad The M\"{u}ger center $\mathcal{Z}_2(\C)$ is not trivial.
\end{prop}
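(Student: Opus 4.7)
The plan is to prove the contrapositive: assuming that both $\mathcal{U}(\C)$ and $\mathcal{Z}_2(\C)$ are trivial, I will show that $\C_{pt}$ must be trivial as well.

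First, I would use the assumption that $\mathcal{Z}_2(\C)$ is trivial to conclude that $\C$ is a nondegenerate braided fusion category. Since $\C$ is weakly integral, it is automatically pseudounitary and carries a canonical spherical structure with respect to which the categorical dimensions coincide with the Frobenius-Perron dimensions; this is exactly the content of \cite[Proposition 8.23, 8.24]{etingof2005fusion} recalled in the preliminaries. Hence $\C$ is premodular, and combined with nondegeneracy, $\C$ is modular.

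Once $\C$ is known to be modular, I would invoke \cite[Theorem 6.2]{gelaki2008nilpotent}, stated in the preliminaries, which gives a canonical isomorphism $\mathcal{U}(\C) \cong \Pic(\C)$ and in particular the equality $\FPdim(\C_{pt}) = |\mathcal{U}(\C)|$. Under our remaining hypothesis that $\mathcal{U}(\C)$ is trivial, this forces $\FPdim(\C_{pt}) = 1$, hence $\C_{pt}$ is trivial. This contradicts the hypothesis that $\C_{pt}$ is nontrivial, completing the proof of the contrapositive and thus of the proposition.

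There is no real obstacle here; the statement is essentially a packaging of two known structural facts (the canonical spherical structure on weakly integral fusion categories, and the identification of the universal grading group with the group of invertibles in the modular case). The only thing to be careful about is verifying that the hypothesis "weakly integral" is strong enough to obtain a spherical structure and hence modularity from mere nondegeneracy, which is exactly the reason for quoting \cite[Proposition 8.23, 8.24]{etingof2005fusion} rather than just the definition of nondegeneracy.
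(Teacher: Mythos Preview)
Your proof is correct and follows essentially the same approach as the paper's: both arguments reduce to the implication ``$\mathcal{Z}_2(\C)$ trivial $\Rightarrow$ $\C$ modular $\Rightarrow$ $\mathcal{U}(\C)\cong\Pic(\C)$,'' using \cite[Proposition 8.23, 8.24]{etingof2005fusion} and \cite[Theorem 6.2]{gelaki2008nilpotent}. The only cosmetic difference is that the paper phrases it as ``assume (1) fails, then (2) holds'' rather than as a full contrapositive.
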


\begin{proof}
We assume that part (1) does not hold, and show part (2). Assume that $\mathcal{Z}_2(\C)$ is trivial. Then $\C$ is a modular category. In this case, $\mathcal{U}(\C)$ is isomorphic to the group $\Pic (\C)$, which contradicts the assumptions. Thus $\mathcal{Z}_2(\C)$ is not trivial.
\end{proof}

Obviously, Proposition \ref{prop22-1} can be applied to weakly integral braided fusion categories which satisfy the assumption of Lemma \ref{lem21} or Proposition \ref{pro22}.

\medbreak
The following theorem is taken from \cite[Theorem 3.13, 3.14]{drinfeld2010braided}. In the case where $\C$ is modular, it is due to M\"{u}ger \cite{muger2003subfactors,muger2003structure}.

\begin{thm}\label{thm23}
Let $\C$ be a braided fusion category, and $\D\subseteq \C$ be a fusion subcategory.

(1)\quad $\FPdim (\D)\FPdim(\D')=\FPdim(\C)\FPdim(\D\cap\mathcal{Z}_2(\C))$.

(2)\quad If $\D$ is nondegenerate then there is an equivalence of braided fusion categories $\C\cong \D\boxtimes\D'$. In particular, $\D'$ is nondegenerate if and only if $\C$ is nondegenerate.
\end{thm}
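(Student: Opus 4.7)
The plan is to prove (1) first, reducing to the modular case, and then derive (2) from (1). When $\C$ is itself modular, $\mathcal{Z}_2(\C)$ is trivial and the claim reduces to M\"uger's classical double centralizer formula $\FPdim(\D)\FPdim(\D')=\FPdim(\C)$. I would prove this using the $S$-matrix of the modular category: a simple $X\in\C$ lies in $\D'$ exactly when $c_{Y,X}c_{X,Y}=\id$ for all $Y\in\D$, a condition which translates to $S_{X,Y}/S_{0,Y}$ defining a character of the fusion ring of $\D$; summing $|S_{X,Y}|^2$ over the appropriate index sets, together with row/column orthogonality of $S$ and the Verlinde formula, gives the dimension identity.

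For a general braided $\C$, I would embed $\C$ braidedly into a nondegenerate ambient category -- the Drinfeld center $\mathcal{Z}(\C)$ being the natural choice, via the canonical embedding $X\mapsto(X,c_{X,-})$. Since $\mathcal{Z}(\C)$ is always modular, the modular case of (1) applies to $\D$ viewed as a subcategory there. The centralizer of $\D$ computed inside $\C$ differs from the one computed inside $\mathcal{Z}(\C)$ (and then intersected with $\C$) by exactly $\D\cap\mathcal{Z}_2(\C)$: every object of $\mathcal{Z}_2(\C)$ centralizes all of $\C$, so $\D\cap\mathcal{Z}_2(\C)\subseteq\D'$, and the extra factor arises because the modular centralizer inside $\mathcal{Z}(\C)$, when restricted to $\C$, already ``forgets'' this symmetric contribution.

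For (2), assume $\D$ is nondegenerate, so $\mathcal{Z}_2(\D)$ is trivial and hence $\D\cap\D'\subseteq\mathcal{Z}_2(\D)$ is trivial. The braiding of $\C$ supplies a braided tensor functor $F\colon\D\boxtimes\D'\to\C$, $X\boxtimes Y\mapsto X\otimes Y$, well-defined because the two factors centralize each other. Triviality of $\D\cap\D'$ makes $F$ fully faithful, and part (1) applied with $\D\cap\mathcal{Z}_2(\C)=\mathbf{1}$ (which follows from $\mathcal{Z}_2(\C)\subseteq\D'$ together with $\D\cap\D'=\mathbf{1}$) yields $\FPdim(\D\boxtimes\D')=\FPdim(\C)$, so $F$ is an equivalence. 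The final claim, that $\D'$ is nondegenerate iff $\C$ is, follows from the identity $\mathcal{Z}_2(\D\boxtimes\D')=\mathcal{Z}_2(\D)\boxtimes\mathcal{Z}_2(\D')\cong\mathcal{Z}_2(\D')$.

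The main obstacle I anticipate is the precise bookkeeping in the reduction step for (1): one must identify the centralizer of $\D$ inside $\C$ with a specific subcategory of the centralizer of $\D$ inside $\mathcal{Z}(\C)$, and verify that the correction factor is exactly $\FPdim(\D\cap\mathcal{Z}_2(\C))$ rather than a larger quantity. A cleaner alternative I would consider is to argue directly via the action of $\D$ on $\C$ by left tensor multiplication: the centralizer $\D'$ is characterized by commutation with this action, and Frobenius--Perron dimension arithmetic for the resulting indecomposable module category (in the spirit of Drinfeld--Gelaki--Nikshych--Ostrik) yields (1) without explicit reference to $\mathcal{Z}(\C)$.
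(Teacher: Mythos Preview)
The paper does not prove this theorem at all: it is quoted verbatim from \cite[Theorem 3.13, 3.14]{drinfeld2010braided}, with the modular case attributed to M\"uger \cite{muger2003subfactors,muger2003structure}. There is therefore no ``paper's own proof'' to compare your proposal against; the authors use Theorem \ref{thm23} as a black box throughout Sections \ref{sec3} and \ref{sec4}.

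That said, your sketch is broadly in the spirit of the arguments in the cited references. The modular case of (1) via the $S$-matrix and Verlinde is indeed M\"uger's approach, and the reduction of the general case to a nondegenerate ambient category is how \cite{drinfeld2010braided} proceeds. Your derivation of (2) from (1) is essentially the standard one. The step you flag as the ``main obstacle''---tracking the correction factor $\FPdim(\D\cap\mathcal{Z}_2(\C))$ when passing from $\mathcal{Z}(\C)$ back to $\C$---is genuine, and your paragraph does not actually carry it out: the assertion that the discrepancy between the two centralizers is ``exactly'' this factor is the entire content of the reduction, and requires a careful argument (in \cite{drinfeld2010braided} this goes through the formula $\FPdim(\D'')=\FPdim(\D)\FPdim(\mathcal{Z}_2(\C))/\FPdim(\D\cap\mathcal{Z}_2(\C))$ and properties of the double centralizer). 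Your alternative suggestion via module-category arithmetic is also viable and closer to how some of these identities are handled in \cite{drinfeld2010braided}, but again you have only gestured at it. For the purposes of this paper none of this matters: the result is imported, not proved.
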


A braided fusion category $\C$ is called slightly degenerate if its M\"{u}ger center $\mathcal{Z}_2(\C)$ is equivalent to the category $\SuperV$ of super vector spaces. The following theorem is taken from \cite[Proposition 2.6 and Corollary 2.8]{etingof2011weakly} and \cite[Lemma 5.4]{muger2000galois}.

\begin{thm}\label{thm24}
Let $\C$ be a braided fusion category.

(1)\quad If $\C$ is  slightly degenerate and pointed then $\C =\SuperV\boxtimes \C_0$, where $\C_0$ is a nondegenerate pointed category.

(2)\quad If $\C$ is a slightly degenerate integral category with $\FPdim(\C)>2$ then $\C$ contains an odd-dimensional simple object outside of the M\"{u}ger center $\mathcal{Z}_2(\C)$.

(3)\quad Supose that the M\"{u}ger center $\mathcal{Z}_2(\C)$
contains $\SuperV$. Let $g$ be the invertible
object generating $\SuperV$, and let $X$ be any simple object of $\C$. Then $g\otimes X$ is not isomorphic to $X$.
\end{thm}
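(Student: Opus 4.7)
The plan is to handle part (3) first, since it needs only the ribbon structure and will serve as an input to part (2); part (1) is independent and best attacked through the classification of pointed braided categories.

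For part (3), I would exploit the canonical twist on the M\"uger center. The fermion $g$ generating $\SuperV\subseteq\mathcal{Z}_2(\C)$ satisfies $\theta_g=c_{g,g}=-\id_{\mathbf{1}}$, and since $g\in\mathcal{Z}_2(\C)$ the double braiding $c_{X,g}c_{g,X}$ is trivial, so the balancing relation collapses to $\theta_{g\otimes X}=\theta_g\theta_X=-\theta_X$. An isomorphism $g\otimes X\cong X$ would, by naturality of $\theta$ on a simple object, force $\theta_X=-\theta_X$, contradicting $\theta_X\ne 0$.

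For part (1), I would use that a pointed braided fusion category is determined by a pair $(A,q)$ with $A$ a finite abelian group and $q\colon A\to k^{\times}$ a quadratic form, whose associated bilinear form $b(x,y)=q(x+y)q(x)^{-1}q(y)^{-1}$ has radical $R$ corresponding to the M\"uger center. The hypothesis forces $(R,q|_R)\cong(\mathbb{Z}/2,q_-)$ with $q_-(1)=-1$. The key step is to show that the generator $r$ of $R$ is not divisible by $2$ in $A$: if $r=2s$, the radical condition $b(r,s)=b(s,s)^2=1$ together with $q(r)=b(s,s)q(s)^2=-1$ constrains $s$ so strongly that a translate of $s$ lies in the radical, enlarging $R$ past $\langle r\rangle$, a contradiction. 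Hence $\langle r\rangle$ splits off as a direct summand $A=\langle r\rangle\oplus A'$, the form $q|_{A'}$ is automatically nondegenerate, and this decomposition promotes to a braided equivalence $\C\cong\SuperV\boxtimes\C_0$.

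For part (2), I would feed part (3) into a combined orbit/dimension argument. By part (3), tensoring with $g$ is a fixed-point-free involution on $\Irr(\C)$, pairing simples of equal Frobenius-Perron dimension, so if every simple outside $\SuperV$ had even Frobenius-Perron dimension then each such paired contribution is divisible by $8$ and the count $\FPdim(\C)=2+\sum_i 2\FPdim(Y_i)^2\equiv 2\pmod 8$ would result. Ruling this out is the main obstacle and is what I expect to be genuinely hard: I would pass to a modular cover via the Deligne-product/de-equivariantization trick built from the Tannakian diagonal $\Rep(\mathbb{Z}/2)\subset\SuperV\boxtimes\SuperV\subset\mathcal{Z}_2(\C\boxtimes\SuperV)$, tracking how odd-dimensional simples of $\C$ lying outside $\SuperV$ correspond to odd-dimensional simples in the cover, and then invoking standard structure results on integral modular categories (existence of small-dimensional simples, Verlinde-type constraints) to show such simples must exist once $\FPdim(\C)>2$. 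The care needed in that reduction, both to identify the correct Tannakian to de-equivariantize by and to control the parity of dimensions through the procedure, is where I expect the real work to lie.
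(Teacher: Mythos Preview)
The paper does not prove this theorem; it is quoted from \cite{etingof2011weakly} (parts (1) and (2)) and \cite{muger2000galois} (part (3)), so there is no in-paper argument to compare against beyond those sources.

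Your treatment of (3) is exactly M\"uger's proof, and your approach to (1) via the pre-metric group $(A,q)$ is the standard one and is correct. In fact the contradiction there is immediate once you use $b(s,s)=q(s)^2$ (a consequence of $q(-a)=q(a)$): then $b(r,s)=b(s,s)^2=q(s)^4=q(2s)=q(r)=-1$, directly violating the radical condition $b(r,s)=1$. No ``translate of $s$ in the radical'' detour is needed; once $r\notin 2A$, the subgroup $\langle r\rangle$ splits off, and any complement is automatically $b$-orthogonal to it since $r$ lies in the radical, so the pre-metric group decomposes as required.

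The genuine gap is in (2). Your mod-$8$ dimension count is correct but, as you already sense, is not by itself a contradiction, and the de-equivariantization/modular-cover program you outline is both vague and unnecessary. The actual argument is elementary and uses (3) \emph{locally} rather than via a global orbit count. First reduce to $\C_{pt}=\SuperV$: if $\C_{pt}\supsetneq\SuperV$, any invertible object outside $\SuperV$ is an odd-dimensional simple outside $\mathcal Z_2(\C)$ and you are done. Now suppose for contradiction that every noninvertible simple has even Frobenius--Perron dimension, and pick any noninvertible simple $X$ (one exists since $\FPdim(\C)>2$). By (3), $g\notin G[X]$, so in the decomposition of $X\otimes X^*$ the unique invertible constituent is $\textbf{1}$, and every other constituent is noninvertible, hence of even dimension. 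Taking Frobenius--Perron dimensions yields $\FPdim(X)^2\equiv 1\pmod 2$, contradicting $\FPdim(X)$ even. The idea you are missing is to feed (3) into the decomposition of $X\otimes X^*$ --- precisely the maneuver the present paper exploits repeatedly, for instance in Lemma~\ref{lem21} and Lemma~\ref{lem31} --- rather than into an orbit count on all of $\Irr(\C)$.
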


Recall that a Tannakian fusion category is a symmetric fusion category such that it is equivalent to the representation category of a finite group as a symmetric fusion category.

\begin{lem}\cite[Corollary 2.50]{drinfeld2010braided}\label{lem25}
Let $\C$ be a symmetric fusion category. Then either $\C$ is Tannakian or $\C$ contains a Tannakian subcategory of Frobenius-Perron dimension $\frac{1}{2}\FPdim (\C)$. In particular, if $\FPdim (\C)$ is odd then $\C$ is Tannakian.
\end{lem}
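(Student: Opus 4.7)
The approach is to invoke Deligne's classification of symmetric fusion categories in characteristic zero: every such $\C$ is equivalent, as a symmetric fusion category, to the representation category $\Rep(G,z)$ of a finite super-group $(G,z)$, where $G$ is a finite group and $z\in Z(G)$ satisfies $z^2=e$. Here $\Rep(G,z)$ denotes $\Rep(G)$ equipped with the modified braiding in which $z$ acts as the parity involution; when $z=e$ this is the ordinary symmetric braiding and $\Rep(G,z)=\Rep(G)$ is honestly Tannakian. Applying this theorem to $\C$ will immediately split the argument into two cases according to whether $z$ is trivial or of order two.

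First I would dispose of the easy case: if $z=e$, then $\C\cong\Rep(G)$ is Tannakian and there is nothing to prove. Assume therefore that $z$ has order exactly two, and consider the full subcategory $\D\subseteq\C$ whose objects are those representations on which $z$ acts as the identity. Since $z$ is central in $G$, the subcategory $\D$ is closed under tensor products, duals, and direct summands, so it is a fusion subcategory of $\C$. On objects of $\D$ the sign twist introduced by $z$ in the super-braiding is trivial, so the restricted braiding agrees with the usual symmetric braiding of $\Rep(G)$; equivalently, $\D$ is canonically equivalent to $\Rep(G/\langle z\rangle)$, which is Tannakian. A direct dimension count gives $\FPdim(\D)=|G|/|\langle z\rangle|=\tfrac{1}{2}\FPdim(\C)$, as required.

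The final assertion follows immediately: if $\FPdim(\C)=|G|$ is odd, then Cauchy's theorem prevents $G$ from containing any element of order two, so the only admissible pair is $(G,e)$, and hence $\C\cong\Rep(G)$ is Tannakian.

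The genuine difficulty in this strategy is the input from Deligne's super-Tannakian theorem, whose proof is substantially harder than the rest of the argument; once that tool is available, everything reduces to elementary group theory and the observation that passing to the $z$-fixed subcategory kills the sign twist in the braiding. No obstacle beyond invoking Deligne appears in establishing the existence of the index-two Tannakian subcategory.
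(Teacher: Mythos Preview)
Your argument is correct. Note, however, that the paper does not actually supply a proof of this lemma: it is quoted verbatim as \cite[Corollary~2.50]{drinfeld2010braided} and used as a black box. So there is no ``paper's own proof'' to compare against here.

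That said, the route you take---invoking Deligne's super-Tannakian classification $\C\simeq\Rep(G,z)$ and then passing to the subcategory on which $z$ acts trivially---is precisely how the cited result is established in \cite{drinfeld2010braided}. Your identification of $\D$ with $\Rep(G/\langle z\rangle)$, the dimension count $\FPdim(\D)=\tfrac12\FPdim(\C)$, and the parity argument via Cauchy's theorem for the odd-dimensional case are all standard and correct. You are also right that the only nontrivial input is Deligne's theorem itself; everything downstream is elementary.
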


\subsection{Equivariantizations}\label{subsec24}
Let $G$ be a finite group and $\mathcal{\C}$ be a fusion category. Let $\underline{G}$ denote the monoidal category whose objects are elements of $G$, morphisms are identities and tensor product is given by the multiplication in $G$. Let ${\rm\underline{Aut}}_{\otimes}\mathcal{\C}$ denote the monoidal category whose objects are tensor autoequivalences of $\C$, morphisms are isomorphisms of tensor functors and tensor product is given by the composition of functors.

\medbreak
An action of $G$ on $\mathcal{\C}$ is a monoidal functor
$$T:\underline{G}\to {\rm\underline{Aut}}_{\otimes}\mathcal{\C},\quad g\mapsto T_g$$
with the isomorphism $f^X_{g,h}: T_g(X)\otimes T_h(X)\cong T_{gh}(X)$, for every $X$ in $\mathcal{\C}$.

\medbreak
Let $\mathcal{\C}$ be a fusion category with an action of $G$. Then the fusion category $\mathcal{\C}^G$, called the $G$-equivariantization of $\mathcal{\C}$, is defined as follows \cite{bruguieres2000categories,drinfeld2010braided,muger2004galois}:

(1)\quad An object in $\mathcal{\C}^G$ is a pair $(X, (u^X_g)_{g\in G})$, where $X$ is an object of $\mathcal{\C}$ and $u^X_g: T_g(X)\to X$
is an isomorphism such that,
$$u^X_gT_g(u^X_h)= u^X_{gh}f^X_{g,h},\quad \mbox{for all\quad} g,h\in G.$$

(2)\quad A morphism $\phi: (Y,u_g^Y)\to (X,u_g^X)$ in $\mathcal{\C}^G$ is a morphism $\phi: Y\to X$ in $\mathcal{\C}$ such that $\phi u_g^Y=u_g^X\phi$, for all $g\in G$.

(3)\quad The tensor product in $\mathcal{\C}^G$ is  defined as $(Y,u_g^Y)\otimes (X,u_g^X)=(Y\otimes X, (u_g^Y\otimes u_g^X)j_g|_{Y,X})$, where $j_g|_{Y,X}:T_g(Y\otimes X)\to T_g(Y)\otimes T_g(X)$ is the isomorphism giving the monoidal structure on $T_g$.

\medbreak
There is a procedure opposite to equivariantization. Let $\C$ be  a fusion category and let $\Rep(G)\subseteq \mathcal{Z}(\C)$ be a Tannakian subcategory which embeds into $\C$ via the forgetful functor $\mathcal{Z}(\C)\to \C$, where $\mathcal{Z}(\C)$ is the Drinfeld center of $\C$. Let $A=Fun(G)$ be the algebra of function on $G$. It is a commutative algebra in $\mathcal{Z}(\C)$ under the embedding  above. Let $\C_G$ be the category of left $A$-modules in $\C$. It is a fusion category which is called the de-equivariantization of $\C$ by $\Rep(G)$.

The two procedures above are inverse to each other; that is, there are canonical equivalences $(\C_G)^G\cong \C$  and $(\C^G)_G\cong \C$. Moreover, we have $$\FPdim(\C^G)=|G|\FPdim(\C) \mbox{\,\,and\,\,} \FPdim(\C_G)=\frac{\FPdim(\C)}{|G|}.$$

\medbreak
Let $\C$ be a braided fusion category, and $\Rep(G)\subseteq \C$ be a Tannakian subcategory. The de-equivariantization $\C_G$ of $\C$ by $\Rep(G)$ is a braided $G$-crossed fusion category. The category $\C_G$ is not braided in general. But the neutral component $(\C_G)_e$ of the associated $G$-grading of $\C_G$ is braided. By \cite[Proposition 4.56]{drinfeld2010braided}, $\C$ is nondegenerate if and only if $(\C_G)_e$ is nondegenerate and the associated grading of $\C_G$ is faithful.

The following theorem will be used in Section \ref{sec3}.

\begin{thm}\cite[Theorem 7.2]{naidu2009fusion}\label{thm26}
Let $\C$ be a braided fusion category. Then $\C$ is group-theoretical if and only if it contains a Tannakian subcategory $\Rep(G)$ such that the de-equivariantization $\C_G$ by $\Rep(G)$ is pointed.
\end{thm}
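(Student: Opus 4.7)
The plan is to prove the two directions of this biconditional separately, both anchored in the well-known characterization of group-theoretical fusion categories via Lagrangian subcategories of the Drinfeld center (Drinfeld--Gelaki--Nikshych--Ostrik): $\C$ is group-theoretical if and only if $\mathcal{Z}(\C)$ contains a Tannakian subcategory equivalent to $\Rep(H)$ with $|H|=\FPdim(\C)$, i.e., a Lagrangian.

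For the easier ``if'' direction, assume $\Rep(G)\subseteq\C$ is Tannakian and that the de-equivariantization $\C_G$ is pointed. Since equivariantization and de-equivariantization are mutually inverse (Subsection~\ref{subsec24}), $\C$ is equivalent to the $G$-equivariantization of the pointed fusion category $\C_G$. I would then exhibit $\C$ explicitly as Morita equivalent to a pointed fusion category by taking $\mathcal{M}=\C_G$ as an indecomposable right $\C$-module category (the action coming from the forgetful functor $(\C_G)^G\to\C_G$ afforded by the algebra $A=\operatorname{Fun}(G)$ in $\C$) and identifying $\C^{*}_{\mathcal{M}}$ with the $G$-graded crossed product $\C_G\rtimes G$. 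The homogeneous components of the latter are each equivalent to the pointed category $\C_G$, so the crossed product is itself pointed, yielding the desired Morita equivalence.

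For the harder ``only if'' direction, assume $\C$ is braided and group-theoretical. By the characterization above, $\mathcal{Z}(\C)$ contains a Lagrangian Tannakian subcategory $\mathcal{L}\simeq\Rep(H)$ with $|H|=\FPdim(\C)$. Since $\C$ is braided, there is a canonical braided embedding $\iota:\C\hookrightarrow\mathcal{Z}(\C)$ sending an object $X$ to $X$ endowed with the half-braiding $c_{-,X}$. The plan is to set $\Rep(G):=\iota^{-1}(\mathcal{L})$, which is automatically a Tannakian subcategory of $\C$ as the preimage of a Tannakian subcategory under a fully faithful braided functor, and then to verify that $\C_G$ is pointed. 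For the latter, I would use the M\"uger centralizer formula of Theorem~\ref{thm23} applied inside $\mathcal{Z}(\C)$ to compute $\FPdim(\Rep(G))$ from the dimensions of $\iota(\C)$, $\mathcal{L}$, and their centralizers, and then identify $\C_G$ with the neutral component of the de-equivariantization of $\mathcal{Z}(\C)$ by $\Rep(G)$ restricted along $\iota$.

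The main obstacle will be the ``only if'' direction. Although the Lagrangian $\mathcal{L}$ has the correct Frobenius-Perron dimension to potentially produce a Tannakian subcategory of the right size via $\iota^{-1}(\mathcal{L})$, it is not a priori clear that $\mathcal{L}$ meets $\iota(\C)$ in a subcategory large enough to make $\C_G$ pointed, nor that the resulting de-equivariantization comes out pointed rather than merely group-theoretical. This step will require a careful dimension count inside $\mathcal{Z}(\C)$ using the interplay of $\iota(\C)$, its centralizer $\iota(\C)'=\iota(\C^{\mathrm{rev}})$, the M\"uger center $\mathcal{Z}_2(\C)$, and $\mathcal{L}$, together with the flexibility to choose a suitable Lagrangian when more than one is available, in order to force the de-equivariantization down to the pointed case.
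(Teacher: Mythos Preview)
The paper does not give its own proof of this statement: Theorem~\ref{thm26} is quoted verbatim from \cite[Theorem 7.2]{naidu2009fusion} as a background result, used later in Theorem~\ref{thm36}, and no argument is supplied. There is therefore nothing in the paper to compare your proposal against.

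A brief comment on the proposal itself. Your ``if'' direction is the standard argument and is essentially correct: if $\C_G$ is pointed then $\C\simeq(\C_G)^G$, and an equivariantization $\D^G$ is Morita equivalent (via the module category $\D$) to the crossed product, which is pointed when $\D$ is. For the ``only if'' direction you have correctly located the real difficulty, but your plan does not yet close it. Taking an arbitrary Lagrangian $\mathcal{L}\subseteq\mathcal{Z}(\C)$ and intersecting with the image of the braided embedding $\iota:\C\hookrightarrow\mathcal{Z}(\C)$ gives a Tannakian subcategory of $\C$, but nothing in your outline forces this intersection to be large enough to make $\C_G$ pointed; the centralizer formula of Theorem~\ref{thm23} alone controls $\FPdim$ of centralizers, not of intersections with an unrelated Lagrangian. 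The actual proof in \cite{naidu2009fusion} exploits the additional structure available when $\C$ is braided, namely the decomposition of $\mathcal{Z}(\C)$ in terms of $\C$ and $\C^{\rm rev}$, to locate a Lagrangian compatible with the $\C$-factor. Your sketch would need that ingredient (or an equivalent one) to go through; as it stands, the phrase ``flexibility to choose a suitable Lagrangian'' is hiding exactly the step that requires proof.
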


\section{Fusion categories whose simple objects have a common prime divisor in their dimensions}\label{sec3}
\setcounter{equation}{0}
\begin{lem}\label{lem31}
Let $\C$ be a nonpointed integral braided fusion category such that the Frobenius-Perron dimensions of all noninvertible simple objects have a common prime divisor $p$. Suppose that $\FPdim(\C_{pt})=2$. Then $\C_{pt}$ is a Tannakian subcategory of $\C$.
\end{lem}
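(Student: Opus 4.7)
The plan is a proof by contradiction: I will assume $\C_{pt}$ is not Tannakian and derive an impossible dimension count for $\C$. First, by Lemma \ref{lem21} the common prime divisor $p$ must divide $|\Pic(\C)|=\FPdim(\C_{pt})=2$, so $p=2$. Writing $\C_{pt}=\{\mathbf{1},g\}$ with $g$ of order $2$, the hexagon axiom applied inside the pointed subcategory gives $c_{g,g}^2=\id_{g\otimes g}$, so $c_{g,g}=\pm 1$ and $\C_{pt}$ is a symmetric fusion subcategory of $\C$. Hence $\C_{pt}$ is equivalent either to $\Rep(\mathbb{Z}_2)$ (Tannakian) or to $\SuperV$. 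I will assume for contradiction that $\C_{pt}\cong\SuperV$. Since $\C$ is pseudounitary with canonical spherical structure (it is integral), a standard ribbon-category identity together with $\dim(g)=\FPdim(g)=1$ forces $\theta_g=c_{g,g}=-1$. I then repeat the $G[X]$ argument from the proof of Lemma \ref{lem21} with $p=2$: for every noninvertible simple $X\in\Irr(\C)$ the order $|G[X]|$ is even, and since $G[X]\subseteq\Pic(\C)=\mathbb{Z}_2$ I get $G[X]=\Pic(\C)$, i.e., $g\otimes X\cong X$.

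Next I invoke the ribbon twist--braiding compatibility $\theta_{g\otimes X}=(\theta_g\otimes\theta_X)\circ c_{X,g}\circ c_{g,X}$, evaluated on the simple object $g\otimes X\cong X$. Writing $c_{X,g}c_{g,X}=\mu_X\,\id_{g\otimes X}$, this reduces to the scalar equation $\theta_X=\theta_g\theta_X\mu_X$, so $\mu_X=\theta_g^{-1}=-1$. Consequently no noninvertible simple of $\C$ centralizes $g$, hence no noninvertible simple lies in $\C_{pt}'$. Combined with the inclusion $\C_{pt}\subseteq\C_{pt}'$ (from the symmetry of $\C_{pt}$), this forces $\C_{pt}'=\C_{pt}$ with $\FPdim(\C_{pt}')=2$. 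Moreover $g$ fails to centralize $\C$ as a whole, so $g\notin\mathcal{Z}_2(\C)$ and $\C_{pt}\cap\mathcal{Z}_2(\C)=\{\mathbf{1}\}$.

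Finally, I will apply Theorem \ref{thm23}(1) with $\D=\C_{pt}$: it reads $2\cdot 2=\FPdim(\C)\cdot 1$, so $\FPdim(\C)=4$. But since $\C$ is nonpointed it contains at least one noninvertible simple $X$; as $\FPdim(X)$ is a positive integer divisible by $p=2$, I have $\FPdim(X)\ge 2$ and thus $\FPdim(X)^2\ge 4$, contributing at least $4$ to $\FPdim(\C)=\sum_{Y\in\Irr(\C)}\FPdim(Y)^2$. Together with the two invertibles $\mathbf{1}$ and $g$, this forces $\FPdim(\C)\ge 6$, contradicting $\FPdim(\C)=4$. Hence $\C_{pt}$ must be Tannakian. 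The main technical obstacle will be invoking the ribbon twist--braiding identity and the identification $\theta_g=c_{g,g}$ for the fermion in the pseudounitary setting, which go slightly beyond what is stated explicitly in the preliminaries; once these standard premodular-category facts are granted, the remainder is a routine application of the $G[X]$ formula (\ref{eq1}) and Theorem \ref{thm23}(1).
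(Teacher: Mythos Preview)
Your claim that ``the hexagon axiom applied inside the pointed subcategory gives $c_{g,g}^2=\id_{g\otimes g}$'' is incorrect: the semion category is a perfectly good braided fusion category on $\mathbb{Z}_2$ with $c_{g,g}=i$. The hexagon equation involves the associator, and when the associator on $\C_{pt}$ is nontrivial one only gets $c_{g,g}^4=1$. So you have not yet reduced to the dichotomy $\C_{pt}\cong\Rep(\mathbb{Z}_2)$ versus $\C_{pt}\cong\SuperV$. The correct way to exclude the nondegenerate (semion/anti-semion) possibilities is: if $\C_{pt}$ were nondegenerate, Theorem~\ref{thm23}(2) would give $\C\cong\C_{pt}\boxtimes\C_{pt}'$ with $(\C_{pt}')_{pt}$ trivial, yet $\C_{pt}'$ would contain all noninvertible simples of $\C$ (still with even Frobenius--Perron dimension), contradicting Lemma~\ref{lem21}. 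In fairness, the paper's own proof also glosses over this step, writing only ``it suffices to prove that $\C_{pt}$ is not equivalent to $\SuperV$.''

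With that gap repaired, the remainder of your argument is correct but takes a genuinely different route from the paper. The paper passes to $\D=\C_{pt}'$, uses Theorem~\ref{thm23}(1) to see $\FPdim(\D)>2$ so that $\D$ contains some noninvertible simple $X$, reads off $g\otimes X\cong X$ from equation~(\ref{eq1}), and then invokes Theorem~\ref{thm24}(3) (M\"uger's lemma, applied inside $\D$ where $\SuperV\subseteq\mathcal{Z}_2(\D)$) for an immediate contradiction. You instead establish $g\otimes X\cong X$ for \emph{every} noninvertible simple $X$, compute the monodromy $c_{X,g}c_{g,X}=\theta_g^{-1}=-1$ directly from the ribbon balancing identity---which is essentially the proof of Theorem~\ref{thm24}(3) in this special case---conclude $\C_{pt}'=\C_{pt}$, and finish with the dimension count $\FPdim(\C)=4<6$ from Theorem~\ref{thm23}(1). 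The paper's route is shorter because it treats M\"uger's lemma as a black box; yours is more self-contained but, as you acknowledge, leans on premodular identities (in particular $\theta_g=c_{g,g}$ for the pseudounitary spherical structure) that are standard but not developed in the paper's preliminaries.
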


\begin{proof}
It is clear that $p=2$ in our case since $p$ divides $2$ by Lemma \ref{lem21}. It suffices to prove that $\C_{pt}$ is not equivalent to the category $\SuperV$ of super vector spaces.

\medbreak
Suppose on the contrary that $\C_{pt}$ is equivalent to $\SuperV$  and $g$ is the unique object generating $\C_{pt}$. Let $D$ be the M\"{u}ger centralizer of $\C_{pt}$ in $\C$. Then $\C_{pt}$ lies in the M\"{u}ger center of $D$. By Theorem \ref{thm23} (1), $6\leq\FPdim(\C)\leq\FPdim(\C_{pt})\FPdim(D)$, which implies that $\FPdim(D)>2$ and $D$ contains noninvertible objects of $\C$. Let $X$ be a noninvertible object of $D$. Then the decomposition of $X\otimes X^*$ (ref. Equation \ref{eq1}) shows that $g\otimes X\cong X$. It contradicts Theorem \ref{thm24}(3). So $\C_{pt}\cong\Rep(\mathbb{Z}_2)$ is Tannakian.
\end{proof}

\begin{thm}\label{thm32}
Let $\C$ be an integral braided fusion category such that the Frobenius-Perron dimensions of all noninvertible simple objects have a common prime divisor $p$. Then $\C_{pt}$ contains a nontrivial Tannakian subcategory of $\C$.
\end{thm}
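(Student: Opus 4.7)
The strategy is to produce a nontrivial Tannakian subcategory inside $\mathcal{Z}_2(\C_{pt})$; since $\mathcal{Z}_2(\C_{pt})\subseteq\C_{pt}$ this suffices.

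First I would show $p\mid\FPdim(\mathcal{Z}_2(\C_{pt}))$. Using the general braided identity $\C_{pt}\subseteq(\C_{ad})'$ (invertibles centralize the adjoint subcategory, a standard consequence of the hexagon axioms), one obtains $\C_{pt}\cap\C_{ad}\subseteq\mathcal{Z}_2(\C_{pt})$. For any noninvertible simple $X$, the subgroup $G[X]\subseteq\Pic(\C)$ lies in $\C_{ad}$ because its elements are simple summands of $X\otimes X^*\in\C_{ad}$; taking Frobenius-Perron dimensions in Equation \ref{eq1} together with the hypothesis that $p$ divides $\FPdim(Y)$ for every noninvertible simple $Y$ forces $p\mid|G[X]|$. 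Hence $p\mid\FPdim(\mathcal{Z}_2(\C_{pt}))$. The pointed symmetric fusion category $\mathcal{Z}_2(\C_{pt})$ contains a Tannakian subcategory of Frobenius-Perron dimension at least $\FPdim(\mathcal{Z}_2(\C_{pt}))/2$ by Lemma \ref{lem25}; this is nontrivial whenever $\mathcal{Z}_2(\C_{pt})\not\cong\SuperV$, and for odd $p$ this nonequivalence is automatic.

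The main obstacle is the remaining case $p=2$ and $\mathcal{Z}_2(\C_{pt})\cong\SuperV$, which I would rule out by generalizing the argument of Lemma \ref{lem31}. Let $g$ be the generator of $\SuperV$. For every noninvertible simple $X$, $G[X]\subseteq\mathcal{Z}_2(\C_{pt})=\{\mathbf{1},g\}$ has even order, so $g\in G[X]$ and $g\otimes X\cong X$. Set $\mathcal{B}=\langle g\rangle'\subseteq\C$; then $\C_{pt}\subseteq\mathcal{B}$ and $\langle g\rangle\subseteq\mathcal{Z}_2(\mathcal{B})$, so Theorem \ref{thm24}(3) applied to $\mathcal{B}$ prohibits any simple $X\in\mathcal{B}$ from satisfying $g\otimes X\cong X$. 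Consequently $\mathcal{B}$ contains no noninvertible simple of $\C$, so $\mathcal{B}=\C_{pt}$. The alternative $g\in\mathcal{Z}_2(\C)$ is directly excluded by Theorem \ref{thm24}(3) applied to $\C$, and Theorem \ref{thm23}(1) applied to $\langle g\rangle$ then yields $\FPdim(\C)=2\FPdim(\C_{pt})$.

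The final contradiction is a counting argument. The action of $\Pic(\C)$ on the set of noninvertible simples has stabilizer $G[X]=\{\mathbf{1},g\}$, so each orbit has size $|\Pic(\C)|/2=\FPdim(\C_{pt})/2$; as $\C$ is nonpointed, there are at least $\FPdim(\C_{pt})/2$ noninvertible simples, each of Frobenius-Perron dimension at least $2$. Thus $\sum_{Y}\FPdim(Y)^2\geq 4\cdot(\FPdim(\C_{pt})/2)=2\FPdim(\C_{pt})$, which contradicts $\sum_{Y}\FPdim(Y)^2=\FPdim(\C)-\FPdim(\C_{pt})=\FPdim(\C_{pt})$. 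This rules out the obstruction and completes the proof.
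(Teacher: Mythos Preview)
Your proof is correct, and it takes a genuinely different route from the paper's.

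The paper argues by case analysis on $\FPdim(\mathcal{Z}_2(\C_{pt}))$. To exclude $\mathcal{Z}_2(\C_{pt})$ trivial it invokes the M\"uger decomposition $\C\cong\C_{pt}\boxtimes(\C_{pt})'$ and applies Lemma~\ref{lem21} to the factor $(\C_{pt})'$. To exclude $\mathcal{Z}_2(\C_{pt})\cong\SuperV$ it first uses Theorem~\ref{thm24}(1) to split $\C_{pt}\cong\SuperV\boxtimes\D_0$ with $\D_0$ nondegenerate pointed, then applies a second M\"uger decomposition along $\D_0$ to reduce to the situation $\FPdim(\C_{pt})=2$, which is Lemma~\ref{lem31}.

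Your argument instead exploits the inclusion $\C_{ad}\subseteq(\C_{pt})'$ to place every stabilizer $G[X]$ inside $\mathcal{Z}_2(\C_{pt})$, which in one stroke gives the sharper statement $p\mid\FPdim(\mathcal{Z}_2(\C_{pt}))$ and also, in the problematic $\SuperV$ case, the exact identification $G[X]=\{\mathbf{1},g\}$ for every noninvertible simple $X$. From there your orbit count together with Theorem~\ref{thm23}(1) finishes directly, without ever invoking Theorem~\ref{thm24}(1), Lemma~\ref{lem31}, or any M\"uger factorization. What the paper's approach buys is that it stays entirely within the toolkit already assembled in Section~\ref{sec2}; what yours buys is a shorter, more self-contained argument (and the divisibility $p\mid\FPdim(\mathcal{Z}_2(\C_{pt}))$ as a byproduct), at the price of importing the standard but not-quoted-here fact $\C_{ad}\subseteq(\C_{pt})'$ from \cite{drinfeld2010braided}.
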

\begin{proof}
Let $\D=\C_{pt}$ and $\mathbb{Z}_2(\D)$ be the M\"{u}ger center of $\D$. By Lemma \ref{lem21}, $\D$ is not trivial.

\medbreak
If $\mathbb{Z}_2(\D)$ is trivial then $\D$ is nondegenerate. So there is an equivalence of braided fusion categories $\C\cong\D\boxtimes\D'$ by Theorem \ref{thm23}(2), where $\D'$ is the M\"{u}ger centralizer of $\D$ in $\C$. Clearly, $\D'$ contains no nontrivial invertible objects but contains all noninvertible objects. This is impossible by Lemma \ref{lem21}. Therefore, $\D$ is degenerate.

\medbreak
If $\FPdim(\mathbb{Z}_2(\D))=2$ then either $\mathbb{Z}_2(\D)=\Rep(\mathbb{Z}_2)$ is the category of representations of $\mathbb{Z}_2$ as a symmetric category, or $\mathbb{Z}_2(\D)=\SuperV$ is the category of super vector spaces. If the first case holds true then we are done. If the second case holds true then $\D$ is slightly degenerate. In this case, $\D=\SuperV\boxtimes\D_0$ by Theorem \ref{thm24} (1), where $\D_0$ is a nondegenerate pointed category. If $\D_0$ is trivial then $\D=\SuperV$. By Lemma \ref{lem31}, such fusion category $\C$ can not exist. If $\D_0$ is not trivial then $\C\cong\D_0\boxtimes \D_0'$ by Theorem \ref{thm23}(2), where $\D_0'$ is the M\"{u}ger centralizer of $\D_0$ in $\C$. Comparing $\C\cong\D_0\boxtimes \D_0'$ and $\D=\SuperV\boxtimes\D_0$, we get that the subcategory of invertible objects of $\D_0'$ is $\SuperV$. Also, $\D_0'$ contains all noninvertible simple objects of $\C$. Such fusion category $\D_0'$ can not exist, also by Lemma \ref{lem31}.

\medbreak
Finally, we consider the possibility that $\FPdim(\mathbb{Z}_2(\D))>2$. In this case, $\mathbb{Z}_2(\D)$ must contain a nontrivial Tannakian subcategory by Lemma \ref{lem25}, so does $\C$.
\end{proof}

The following corollary follows from Theorem \ref{thm32} and the discussion in Section \ref{subsec24}.
\begin{cor}\label{cor32-1}
Let $\C$ be an integral braided fusion category such that the Frobenius-Perron dimensions of all noninvertible simple objects have a common prime divisor $p$. Then $\C$ is a $G$-equivariantization of a fusion category $\C_G$, where $G$ is an Abelian group and $\FPdim(\C_G)=\FPdim(\C)/|G|$.
\end{cor}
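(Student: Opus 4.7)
The plan is to combine Theorem \ref{thm32} with the de-equivariantization machinery recalled in Subsection \ref{subsec24}. By Theorem \ref{thm32}, there exists a nontrivial Tannakian subcategory $\Rep(G)\subseteq \C_{pt}\subseteq \C$ for some finite group $G$. Since $\C$ is braided, every fusion subcategory of $\C$ lifts canonically to the Drinfeld center $\Z(\C)$ via the braiding, so $\Rep(G)$ embeds into $\Z(\C)$ in such a way that the composition with the forgetful functor $\Z(\C)\to \C$ is the given embedding. This places us exactly in the situation of Subsection \ref{subsec24}, and we may form the de-equivariantization $\C_G$.

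The first step is to observe that $G$ is abelian. Indeed, $\Rep(G)$ is contained in $\C_{pt}$, so every simple object of $\Rep(G)$ has Frobenius-Perron dimension $1$; this means every irreducible complex representation of $G$ is one-dimensional, forcing $G$ to be abelian. The second step is to apply the equivalence $(\C_G)^G\cong \C$ stated in Subsection \ref{subsec24}: this exhibits $\C$ as a $G$-equivariantization of $\C_G$. The third step is the dimension count, which is immediate from the formula $\FPdim(\C_G)=\FPdim(\C)/|G|$ recorded in the same subsection.

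There is really no obstacle here; the entire content of the corollary lies in Theorem \ref{thm32}, and the rest is a direct unpacking of the general equivariantization/de-equivariantization correspondence together with the elementary remark that a Tannakian category consisting of invertible objects must come from an abelian group. Thus the proof amounts to citing Theorem \ref{thm32}, noting that the Tannakian subcategory produced there sits inside $\C_{pt}$ (whence $G$ is abelian), and invoking the standard equivalences from Subsection \ref{subsec24}.
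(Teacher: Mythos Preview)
Your proof is correct and is exactly the argument the paper has in mind: the paper's own proof consists of the single sentence ``follows from Theorem \ref{thm32} and the discussion in Section \ref{subsec24},'' and you have merely spelled out those details (the Tannakian subcategory lies in $\C_{pt}$ so $G$ is abelian, the braiding lifts $\Rep(G)$ into $\Z(\C)$, and the canonical equivalence $(\C_G)^G\cong\C$ together with the dimension formula finishes it).
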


Recall that a near-group fusion category is a fusion category $\C$ where all but $1$ simple objects are invertible. The fusion rules of $\C$ are determined by a pair $(G,k)$, where $G$ is the group of invertible objects and $k$ is a nonnegative integer. Let $X$ be the unique noninvertible simple object. Then
\begin{equation}\label{eq6}
\begin{split}
X\otimes X=\bigoplus_{g\in G}g\oplus kX.
\end{split}\nonumber
\end{equation}

The following corollary follows from Corollary \ref{cor32-1}.
\begin{cor}\label{cor33}
Let $\C$ be an integral braided fusion category of type $(1,m;s,n)$. Then $\C$ is a $G$-equivariantization of a fusion category $\C_G$, where $G$ is an Abelian group. In particular, every integral braided near-group fusion category has this property.
\end{cor}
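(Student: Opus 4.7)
The plan is to deduce this corollary as an immediate specialization of Corollary \ref{cor32-1}. Since $\C$ has type $(1,m;s,n)$, the convention $1 = d_0 < d_1 < \cdots$ in the definition of type forces $s > 1$, and by construction every noninvertible simple object of $\C$ has Frobenius-Perron dimension exactly $s$. First I would pick any prime divisor $p$ of $s$; then trivially every noninvertible simple object has dimension divisible by $p$, so the hypothesis of Corollary \ref{cor32-1} is satisfied. Invoking that corollary produces the desired Tannakian subcategory $\Rep(G) \subseteq \C_{pt}$ with $G$ abelian and exhibits $\C$ as the $G$-equivariantization of the de-equivariantization $\C_G$.

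For the ``in particular'' clause about near-group categories, I would argue as follows. Let $\C$ be an integral braided near-group fusion category with group of invertibles $\Pic(\C) = G_0$ and unique noninvertible simple object $X$ of Frobenius-Perron dimension $s$. The relation $X \otimes X = \bigoplus_{g \in G_0} g \oplus kX$ gives $s^2 = |G_0| + ks$, and integrality forces $s \in \mathbb{Z}_{\geq 1}$. If $s = 1$ then $\C$ is pointed and the conclusion holds trivially with $G$ the trivial group. Otherwise $s \geq 2$, so $\C$ is of type $(1, |G_0|; s, 1)$ and the first assertion of the corollary applies directly.

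There is essentially no obstacle here: the entire argument is a one-line specialization. The only point worth articulating in the write-up is that ``type $(1,m;s,n)$'' means there is a single Frobenius-Perron dimension $s > 1$ shared by all noninvertible simple objects, which makes the common-prime-divisor hypothesis of Corollary \ref{cor32-1} automatic for any prime $p \mid s$. For the near-group part the only care needed is handling the pointed degenerate case $s = 1$ separately, which is trivial.
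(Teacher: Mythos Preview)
Your proposal is correct and matches the paper's own justification, which simply states that the corollary follows from Corollary~\ref{cor32-1}; you have merely spelled out the obvious specialization (pick any prime $p\mid s$). The only superfluous step is your separate treatment of the case $s=1$ for near-group categories: by definition a near-group category has a noninvertible simple object, so $s\geq 2$ automatically and that case is vacuous.
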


Using Theorem \ref{thm32}, we can recover a theorem in \cite{natale2013weakly}.

\begin{cor}\label{cor34}
Let $p$ be a prime number and let $\C$ be an integral braided fusion category such that the Frobenius-Perron dimensions of simple objects are powers of $p$. Then $\C$ is solvable.
\end{cor}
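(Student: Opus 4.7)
The plan is to induct on $\FPdim(\C)$. The base case, when $\C$ is pointed, is immediate: such a $\C$ is described by a group of invertibles of order a power of $p$, and any $p$-group is nilpotent and hence solvable, so the pointed fusion category $\C$ is solvable.

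For the inductive step, assume $\C$ is not pointed, so it has at least one noninvertible simple, and every such simple has Frobenius--Perron dimension a positive power of $p$. Theorem \ref{thm32} then provides a nontrivial Tannakian subcategory $\Rep(G)\subseteq \C_{pt}$; since $|G|$ divides $\FPdim(\C_{pt})$, which is a power of $p$, $G$ is a nontrivial $p$-group, hence solvable. De-equivariantizing gives $\C_G$ with $\FPdim(\C_G)=\FPdim(\C)/|G|<\FPdim(\C)$, whose braided neutral component $(\C_G)_e$ will be the target of the inductive hypothesis.

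The crux is verifying that every simple object $Y$ of $\C_G$ still has $p$-power Frobenius--Perron dimension. By the Clifford-theoretic description of simples of the equivariantization $(\C_G)^G\cong \C$, every simple object of $\C$ has Frobenius--Perron dimension of the form $[G:G_Y]\cdot\dim(\pi)\cdot\FPdim(Y)$ for a stabilizer $G_Y\leq G$ and a (possibly projective) irreducible representation $\pi$ of $G_Y$; this dimension, $[G:G_Y]$, and $\dim(\pi)$ (which divides $|G_Y|$) are all powers of $p$, forcing $\FPdim(Y)$ to be one as well. This property is inherited by the fusion subcategory $(\C_G)_e$. Induction then shows $(\C_G)_e$ is solvable; hence $\C_G$, as an extension of a solvable category by a subgroup of the solvable $p$-group $G$, is solvable; and $\C=(\C_G)^G$ is the equivariantization of a solvable category by a solvable group, hence solvable. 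The main technical obstacle is exactly this dimension bookkeeping across de-equivariantization.
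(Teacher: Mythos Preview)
Your overall strategy---induct on $\FPdim(\C)$, extract a Tannakian $\Rep(G)\subseteq\C_{pt}$ via Theorem~\ref{thm32}, de-equivariantize, apply induction to the braided neutral component $(\C_G)_e$, and reassemble using stability of solvability under extensions and equivariantizations---matches the paper's proof exactly. However, there is a genuine error in the dimension bookkeeping.

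You assert that $\FPdim(\C_{pt})$ is a power of $p$, and hence that $G$ is a $p$-group. This is false in general: take $\C=\Rep(\mathbb{Z}_3\times S_3)$, which is symmetric (hence braided), integral, and has simple objects of dimensions $1$ and $2$ only (so $p=2$), yet $\FPdim(\C_{pt})=6$. The hypotheses on $\C$ only guarantee, via Lemma~\ref{lem21}, that $p$ \emph{divides} $\FPdim(\C_{pt})$, not that it is a $p$-power. The same mistake appears in your base case, where you claim a pointed $\C$ has group of invertibles of $p$-power order.

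This matters because your argument that $\FPdim(Y)$ is a $p$-power for every simple $Y$ of $\C_G$ relies on $[G:G_Y]$ and $\dim\pi$ being $p$-powers, which you deduce from $G$ being a $p$-group. The fixes are simple and are exactly what the paper does. First, since $\Rep(G)\subseteq\C_{pt}$ is pointed, every irreducible representation of $G$ is one-dimensional, so $G$ is abelian and hence solvable; no $p$-group claim is needed. Second, for the dimensions in $\C_G$, observe directly from the formula $\FPdim S_{Y,\pi}=[G:G_Y]\,\dim(\pi)\,\FPdim(Y)$ that $\FPdim(Y)$ is a rational algebraic integer, hence a positive integer, dividing the $p$-power $\FPdim S_{Y,\pi}$; therefore $\FPdim(Y)$ is itself a $p$-power, with no information about $[G:G_Y]$ or $\dim\pi$ required. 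Similarly, in the base case a pointed braided $\C$ has abelian group of invertibles (by the braiding), which already suffices for solvability.
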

\begin{proof}
We prove by induction on $\FPdim(\C)$. By Theorem \ref{thm32}, $\C_{pt}$ contains a nontrivial Tannakian subcategory $\mathcal{E}=\Rep(G)$ of $\C$. Since $\mathcal{E}$ is pointed, $G$ is Abelian, and hence $G$ is solvable.

Recall from Section \ref{subsec24} that the de-equivariantization $\C_G$ of $\C$ by $\mathcal{E}$ is a $G$-grading fusion category. By \cite[Corollary 2.13]{burciu2013fusion}, the Frobenius-Perron dimensions of simple objects of $\C_G$ are powers of $p$. Let $(\C_G)_e$ be the neutral component of $\C_G$ associated to the $G$-grading. Then $(\C_G)_e$ is braided and the Frobenius-Perron dimensions of simple objects of $(\C_G)_e$ are powers of $p$. By induction, $(\C_G)_e$ is solvable. In addition, $\C_G$ is a $K$-extension of $(\C_G)_e$, where $K$ is a subgroup of $G$. Therefore, $\C$ is solvable since solvability is preserved under taking extensions and equivariantizations by  solvable groups \cite[Proposition 4.5]{etingof2011weakly}.
\end{proof}

\begin{rem}\label{rem32-2}
Let $\C$ be a weakly integral fusion category. Then $\FPdim(X)^2$ is an integer, for all $X\in \Irr(\C)$. If $\C$ is not integral then $\C$ is a $\mathbb{Z}_2$-extension of an integral fusion category.

Suppose that the Frobenius-Perron dimensions of all noninvertible simple objects of $\C$ have a common prime divisor $p$; that is, for every $X\in \Irr(\C)$ the dimension $\FPdim(X)$ has the form $pm^{\frac{n}{2}}$, where $m$ is a natural number and $n$ is a nonnegative integer. Then $\C$ is a $\mathbb{Z}_2$-extension of an integral fusion category $\D$. By Theorem \ref{thm32}, $\D$ contains a nontrivial Tannakian subcategory, so does $\C$. Similarly, we can get that all preceding results in this section also hold for weakly integral fusion categories.
\end{rem}

Group-theoretical categories can be explicitly described by finite group data  \cite{Ostrik2003}. So we are especially interested in when a fusion category is group-theoretical. We first recall some results from \cite{burciu2013fusion}.

\medbreak
Let $\C^G$ be the equivariantization of $\C$ under the action $T:\underline{G}\to {\rm\underline{Aut}}_{\otimes}\C$, and let $F:\C^G\to \C$ denote the forgetful functor. For every simple object $X$ of $\C$, let $G_X = \{g\in G : T_g(X)\cong X\}$ be the inertia subgroup of $X$. Since $X$ is a simple object, there is a $2$-cocycle
$\alpha_X: G_X \times G_X\to k^*$. The simple objects of $\C^G$ are parameterized by pairs $(X,\pi)$, where $X$ runs over the $G$-orbits on $\Irr(\C)$ and $\pi$ is an equivalence class of an irreducible $\alpha_X$-projective representation of $G_X$. If we use $S_{X,\pi}$ to indicate the isomorphism class of the simple object corresponding to the pair $(X,\pi)$, we get the following relation \cite[Corollary 2.13]{burciu2013fusion}:
\begin{equation}\label{eq7}
\begin{split}
\FPdim S_{X,\pi} ={\rm dim}\pi [G:G_X] \FPdim X.
\end{split}
\end{equation}

\begin{prop}\label{prop35}
Let $\C$ be an integral braided fusion category, and let $d$ be the least common multiple of Frobenius-Perron dimensions of all simple objects of $\C$. If there exists a nontrivial Tannakian subcategory $\Rep(G)\subseteq \C$ such that the de-equivariantization $\C_G$ is pointed then $d$ divides the order of $G$.
\end{prop}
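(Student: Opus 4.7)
The plan is to realize $\C$ as the $G$-equivariantization of the pointed category $\C_G$, apply formula \ref{eq7} for the Frobenius-Perron dimensions of the simple objects of an equivariantization, and then invoke the classical fact that the dimension of every irreducible projective representation of a finite group $H$ divides $|H|$ to conclude that each $\FPdim(X)$ divides $|G|$.

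First I would use the equivariantization/de-equivariantization duality recalled in Subsection \ref{subsec24} to rewrite $\C\cong (\C_G)^G$. Since $\C_G$ is pointed, every simple object of $\C_G$ has Frobenius-Perron dimension $1$, and the action of $G$ on $\C_G$ descends to an action on $\Irr(\C_G)$; for each orbit representative $X\in\Irr(\C_G)$ one has an inertia subgroup $G_X\le G$ together with a $2$-cocycle $\alpha_X\in Z^2(G_X,k^*)$. Substituting $\FPdim(X)=1$ into Equation \ref{eq7} then gives
\[
\FPdim(S_{X,\pi})=\dim(\pi)\cdot [G:G_X]
\]
for every simple object $S_{X,\pi}$ of $\C$.

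The only nontrivial step, and what I expect to be the main obstacle, is to show that $\dim(\pi)$ divides $|G_X|$. For this I would lift $\pi$ through a finite central extension $1\to Z\to \widetilde{G_X}\to G_X\to 1$ trivializing the cohomology class of $\alpha_X$, so that $\pi$ becomes an honest irreducible representation $\tilde\pi$ of $\widetilde{G_X}$ on which the central subgroup $Z$ acts by a fixed character; Ito's theorem applied to the abelian normal subgroup $Z\trianglelefteq \widetilde{G_X}$ then yields $\dim\pi=\dim\tilde\pi\mid [\widetilde{G_X}:Z]=|G_X|$. Combining the two divisibilities, $\FPdim(S_{X,\pi})$ divides $|G_X|\cdot[G:G_X]=|G|$ for every $S_{X,\pi}\in\Irr(\C)$, and hence the least common multiple $d$ of these dimensions divides $|G|$ as required.
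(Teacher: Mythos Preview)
Your proposal is correct and follows exactly the same route as the paper: identify $\C\cong(\C_G)^G$, use Equation~\ref{eq7} with $\FPdim X=1$ to get $\FPdim S_{X,\pi}=\dim\pi\cdot[G:G_X]$, and then use that $\dim\pi\cdot[G:G_X]$ divides $|G|$. The only difference is that the paper simply asserts this last divisibility, whereas you supply an argument for it (lifting $\pi$ to a linear representation of a finite central extension and applying Ito's theorem to get $\dim\pi\mid|G_X|$); your justification is valid and fills in what the paper leaves implicit.
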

\begin{proof}
Note that $\C\cong (\C_G)^G$ as a fusion category. Let $S_{X,\pi}$ be a simple object of $\C$ as described above. Since $\C_G$ is pointed, $\FPdim X=1$. Hence $\FPdim S_{X,\pi} ={\rm dim}\pi [G:G_X]$ by Equation \ref{eq7}. On the other hand, ${\rm dim}\pi [G:G_X]$ divides the order of $G$. So $\FPdim S_{X,\pi}$ divides the order of $G$, and hence $d$ divides the order of $G$.
\end{proof}

Note that under the assumption of Proposition \ref{prop35} the fusion category $\C$ is group-theoretical, by Theorem \ref{thm26}.  Motivated by this observation, we get the following theorem.

\begin{thm}\label{thm36}
Let $p$ be a prime number and let $\C$ be a braided fusion category such that the Frobenius-Perron dimensions of simple objects are powers of $p$. Suppose that $\FPdim(\C_{pt})=p$. Then $\C$ is group-theoretical. Moreover, $\C$ is of type $(1,p;p,m)$ or $(1,p)$.
\end{thm}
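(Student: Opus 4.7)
The plan is to invoke Theorem \ref{thm32} to identify $\C_{pt}$ as Tannakian, de-equivariantize by it, and then show the resulting fusion category $\mathcal{A}:=\C_{G}$ is pointed via a mod-$p$ counting argument; group-theoreticity then follows from Theorem \ref{thm26}. If $\C$ is already pointed, then $\C=\C_{pt}$ has type $(1,p)$ and is automatically group-theoretical, so assume $\C$ has a noninvertible simple object, whose Frobenius-Perron dimension is then a positive power of $p$; in particular all noninvertible simples share $p$ as a common prime divisor. Theorem \ref{thm32} produces a nontrivial Tannakian subcategory of $\C_{pt}$, which must coincide with $\C_{pt}$ itself because $\FPdim(\C_{pt})=p$ is prime. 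Hence $\C_{pt}\cong \Rep(\mathbb{Z}_p)$. Set $G=\mathbb{Z}_p$; then $\C\cong \mathcal{A}^{G}$ and $\FPdim(\mathcal{A})=\FPdim(\C)/p$, and by \cite[Corollary 2.13]{burciu2013fusion} every simple object of $\mathcal{A}$ still has Frobenius-Perron dimension a power of $p$.

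Using the parametrization of simples of $\C=\mathcal{A}^{G}$ recalled in Section \ref{subsec24}, a simple $S_{X,\pi}$ is invertible precisely when $\FPdim X=1$, $G_{X}=G$, and $\dim\pi=1$. Since $H^{2}(\mathbb{Z}_{p},k^{*})=0$, every irreducible projective representation of $G$ is an ordinary $1$-dimensional character, of which there are exactly $p$. Thus $p=\FPdim(\C_{pt})=p\cdot n$, where $n$ is the number of $G$-fixed isomorphism classes in $\Pic(\mathcal{A})$, forcing $n=1$. Because the only subgroups of $\mathbb{Z}_{p}$ are $\{e\}$ and $G$, the remaining invertibles of $\mathcal{A}$ form $G$-orbits of size $p$; if there are $t$ such orbits, then $|\Pic(\mathcal{A})|=1+pt$, which is coprime to $p$.

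Now suppose toward a contradiction that $Y\in\Irr(\mathcal{A})$ is noninvertible, with $\FPdim Y=p^{j}$ for some $j\geq 1$. The subgroup $G[Y]\subseteq \Pic(\mathcal{A})$ has order dividing both $|\Pic(\mathcal{A})|=1+pt$ and $(\FPdim Y)^{2}=p^{2j}$ (the latter by \cite[Lemma 2.2]{dong2012frobenius}), and these are coprime, so $|G[Y]|=1$. Then (\ref{eq1}) gives $Y\otimes Y^{*}=\mathbf{1}\oplus\bigoplus_{Z} m(Z,Y\otimes Y^{*})Z$ with each $Z$ noninvertible of Frobenius-Perron dimension divisible by $p$; equating Frobenius-Perron dimensions yields $p^{2j}-1\equiv 0\pmod p$, which is absurd. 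Hence $\mathcal{A}$ is pointed, and Theorem \ref{thm26} implies that $\C$ is group-theoretical. For the type, each of the $t$ size-$p$ $G$-orbits in $\Pic(\mathcal{A})$ has $G_{Y}=\{e\}$ and contributes via (\ref{eq7}) one simple of $\C$ of Frobenius-Perron dimension $p$, while $\mathbf{1}$ contributes the $p$ invertibles of $\C$; so $\C$ is of type $(1,p)$ if $t=0$ and of type $(1,p;p,t)$ otherwise. The main obstacle is recognizing that the coprimality $|\Pic(\mathcal{A})|\equiv 1\pmod p$ obtained from the orbit count forces $G[Y]$ to be trivial and thereby enables the mod-$p$ dimension contradiction.
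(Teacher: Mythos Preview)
Your proof is correct and follows the same overall strategy as the paper: identify $\C_{pt}\cong\Rep(\mathbb{Z}_p)$ as Tannakian via Theorem~\ref{thm32}, de-equivariantize, show the result $\mathcal{A}=\C_G$ is pointed by a mod-$p$ count, and conclude via Theorem~\ref{thm26}.

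The only difference is tactical. For the counting step, the paper observes directly that $\FPdim(\mathcal{A})=\FPdim(\C)/p\equiv 1\pmod p$ (since $\FPdim(\C)=p+p^2\cdot(\text{integer})$ from the type of $\C$) and then simply cites Lemma~\ref{lem21}: a noninvertible simple in $\mathcal{A}$ would force $p\mid|\Pic(\mathcal{A})|$ and hence $p\mid\FPdim(\mathcal{A})$. You instead establish $|\Pic(\mathcal{A})|\equiv 1\pmod p$ by analyzing which $G$-orbits of invertibles in $\mathcal{A}$ yield invertibles of $\C$ (using $H^2(\mathbb{Z}_p,k^*)=0$), and then essentially re-prove Lemma~\ref{lem21} in the contradiction step. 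For the type claim, the paper argues by contradiction that a simple of dimension $p^i$ with $i\geq 2$ would force $\mathcal{A}$ to be non-pointed, while you constructively enumerate the simples of $\C$ from the orbit decomposition of $\Pic(\mathcal{A})$. Your route is slightly longer but has the minor bonus of identifying the parameter $m$ in the type $(1,p;p,m)$ as the number of nontrivial $G$-orbits on $\Pic(\mathcal{A})$.
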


\begin{proof}
By Theorem \ref{thm32}, $\C_{pt}\cong \Rep(\mathbb{Z}_p)$ is a Tannakian subcategory of $\C$. Let $\C_{\mathbb{Z}_p}$ be the de-equivariantization of $\C$ by $\Rep(\mathbb{Z}_p)$. Then
\begin{equation}\label{eq8}
\begin{split}
\FPdim \C_{\mathbb{Z}_p} =\frac{\FPdim \C}{p}=1+pm,
\end{split}\nonumber
\end{equation}
for some positive integer $m$.

\medbreak
By Equation \ref{eq7}, the Frobenius-Perron dimension of every simple object of $\C_{\mathbb{Z}_p}$ is a power of $p$. If $\C_{\mathbb{Z}_p}$ contains a noninvertible simple object then Lemma \ref{lem21} shows that $p$ divides the Frobenius-Perron dimension of $(\C_{\mathbb{Z}_p})_{pt}$. Hence $\FPdim \C_{\mathbb{Z}_p} = np+p^2s$ for some positive integers $m$ and $s$. It is a contradiction. So every simple object of $\C_{\mathbb{Z}_p}$ is invertible, that is, $\C_{\mathbb{Z}_p}$ is pointed. It follows from Theorem \ref{thm26}(2) that $\C$, being an equivariantization of a pointed fusion category, is group-theoretical.

\medbreak
Suppose that $\C$ contains a simple object $S$ of Frobenius-Perron dimension $p^i, i\geq2$. Then $S$ corresponds to a pair $(X,\pi)$, where $X$ is a simple object of $\C_G$, $\pi$ is an equivalence class of an irreducible $\alpha_X$-projective representation of $G_X$ as recalled before Proposition \ref{prop35}, and we have
$$\FPdim S = [G:G_X] {\rm dim}\pi \FPdim X.$$

Since $[G:G_X] {\rm dim}\pi$ divides $p$, $\FPdim X=p^i$ or $p^{i-1}$. Therefore, $\C_G$ can not be pointed. It is a contradiction. So $\C$ can not contain simple objects of Frobenius-Perron dimension $p^i, i\geq2$.
\end{proof}

\begin{rem}
In \cite{natale2012fusion}, Natale and Plavnik studied semisimple Hopf algebras of type $(1,p;p,n)$, where $p$ is a prime number. They proved that if $H$ is a quasitriangular semisimple Hopf algebra of type $(1,p;p,n)$ as an algebra then $\Rep H$ is solvable.
\end{rem}


%
%

\section{Fusion categories of type $(1,m;p,n;q,s)$}\label{sec4}
\setcounter{equation}{0}
\begin{lem}\label{lem41}
Let $1<p<q$ be positive integers such that ${\rm gcd}(p,q)=1$, and let $\C$ be a braided fusion category of type $(1,2;p,m;q,n)$. Then $\C$ contains a nontrivial Tannakian subcategory in any of the following cases:

(1)\quad $p^2\leq q$;

(2)\quad $p,q$ are powers of two distinct prime numbers.
\end{lem}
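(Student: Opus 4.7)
The plan is to show that $\C_{pt}$ itself is Tannakian; since $\FPdim(\C_{pt})=2$ and $\C_{pt}$ is a pointed braided subcategory (hence symmetric), this amounts to excluding $\C_{pt}\cong\SuperV$.

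By Proposition~\ref{prop22-1}, at least one of $\mathcal{U}(\C)$ and $\mathcal{Z}_2(\C)$ is nontrivial. I would first treat a M\"uger center of dimension at least $3$: since $\mathcal{Z}_2(\C)$ is symmetric, Lemma~\ref{lem25} immediately yields a nontrivial Tannakian subcategory inside $\mathcal{Z}_2(\C)$. Next consider the modular case $\mathcal{Z}_2(\C)=\vect$: then $\mathcal{U}(\C)\cong\Pic(\C)\cong\mathbb{Z}_2$, and the generator $g$ of $\Pic(\C)$ sits in the nontrivial component of the universal grading, so $\Pic(\C_{ad})$ is trivial. Using the identity $\C_{ad}'=\C_{pt}$ valid for modular $\C$, one checks that $\C_{ad}$ is itself modular. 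Proposition~\ref{pro22} applied to $\C_{ad}$ then forbids the simultaneous presence of dim-$p$ and dim-$q$ simples (its group of invertibles has order $1$, not $>1$), while Lemma~\ref{lem21} forbids either alone (the common prime divisor would have to divide $|\Pic(\C_{ad})|=1$). Hence $\C_{ad}=\vect$, contradicting $\FPdim(\C)\geq 2+p^2>2$.

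The remaining case is $\FPdim(\mathcal{Z}_2(\C))=2$. Here $\mathcal{Z}_2(\C)$ is integral and symmetric of dimension $2$, hence pointed, and its invertibles lie in $\Pic(\C)=\{\mathbf{1},g\}$, so $\mathcal{Z}_2(\C)=\C_{pt}$. If $\C_{pt}\cong\Rep(\mathbb{Z}_2)$ we are done; otherwise $\C_{pt}\cong\SuperV$ and $\C$ is slightly degenerate. Theorem~\ref{thm24}(3) then forces $g\otimes X\not\cong X$, and hence $G[X]=\{\mathbf{1}\}$, for every noninvertible simple $X$. For $X\in\Irr_p(\C)$ equation~(\ref{eq1}) yields
\[
p^2-1=\alpha p+\beta q,\qquad \alpha,\beta\in\mathbb{Z}_{\geq 0}.
\]
Under hypothesis~(i), $p^2-1<q$ forces $\beta=0$ and so $p\mid 1$, the sought contradiction.

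The main obstacle is hypothesis~(ii), where the single equation above admits nonnegative integer solutions. I would also write the companion equation $q^2-1=\alpha' p+\beta' q$ coming from some $Y\in\Irr_q(\C)$, and couple it with the mixed fusion $X\otimes Y$. Since $\FPdim(X\otimes Y)=pq$ admits no invertible summand (as $\FPdim(X)\ne\FPdim(Y)$) and $\gcd(p,q)=1$, the coprimality argument used in the proof of Proposition~\ref{pro22} forces $X\otimes Y$ to decompose into either $q$ simples of dimension $p$ or $p$ simples of dimension $q$, counted with multiplicity. Combining the three decompositions via Frobenius reciprocity, and invoking the extra arithmetic that $p=r^a$ and $q=s^b$ are powers of distinct primes $r,s$, should force an inconsistency among the multiplicities $\alpha,\alpha',\beta,\beta'$ and the coefficients appearing in $X\otimes Y$. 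This coordinated arithmetic step is the delicate heart of the argument.
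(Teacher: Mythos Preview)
Your opening claim that a pointed braided subcategory of dimension $2$ is ``hence symmetric'' is false: the semion category is pointed, braided, and modular. This overreach is harmless for the later case analysis on $\mathcal{Z}_2(\C)$, but it infects your modular branch. There you assert that the generator $g$ lies in the nontrivial component of the universal grading; equivalently, that $g\notin\C_{ad}=(\C_{pt})'$. That holds precisely when $\C_{pt}$ is itself modular (semion), i.e.\ when $c_{g,g}^2\neq\id$. If instead $\C_{pt}\cong\SuperV$ (the very case you are trying to exclude), then $c_{g,g}=-\id$, so $g\in(\C_{pt})'=\C_{ad}$ and $\Pic(\C_{ad})=\{\mathbf 1,g\}$ is \emph{not} trivial. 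Your contradiction via Proposition~\ref{pro22}/Lemma~\ref{lem21} then evaporates, and the subcase ``$\C$ modular with $\C_{pt}\cong\SuperV$'' is left unhandled. The paper sidesteps this entirely: rather than splitting on $\mathcal{Z}_2(\C)$, it assumes $\C_{pt}\cong\SuperV$ and passes once and for all to $\D=(\C_{pt})'$, noting $\C_{pt}\subseteq\mathcal{Z}_2(\D)$; either $\mathcal{Z}_2(\D)$ is strictly larger (Lemma~\ref{lem25} finishes) or $\D$ is slightly degenerate, and the rest of the argument runs inside $\D$. This uniformly absorbs your problematic modular subcase.

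Your case~(i) argument in the slightly degenerate branch is fine and matches the paper's. The real gap is case~(ii): the ``coordinated arithmetic step'' you gesture at is not carried out, and there is no reason to expect a purely combinatorial multiplicity count to close it. The paper does \emph{no} arithmetic here; it invokes \cite[Proposition~7.4]{etingof2011weakly}, a structural result guaranteeing that a slightly degenerate integral braided category whose simple dimensions are powers of at most two primes contains a nontrivial Tannakian subcategory. That proposition rests on equivariantization/de-equivariantization machinery, not on fusion-rule bookkeeping, so you should cite it for $\D$ rather than attempt the multiplicity juggling.
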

\begin{proof}

Suppose that $\C_{pt}$ is equivalent to $\SuperV$, otherwise $\C_{pt}\cong\Rep(\mathbb{Z}_2)$ is a nontrivial Tannakian subcategory of $\C$. Let $D$ be the M\"{u}ger centralizer of $\C_{pt}$ in $\C$. Then $\C_{pt}$ lies in the M\"{u}ger center of $D$. We only consider the case when $D$ is slightly degenerate, otherwise the M\"{u}ger center $Z_2(\D)$ is bigger than $\SuperV$ and contains a nontrivial Tannakian subcategory by Lemma \ref{lem25}.

By Theorem \ref{thm23} (1), $\FPdim(\C_{pt})\FPdim(\D)\geq\FPdim(\C)> 6$, which implies that $\FPdim(\D)>2$ and hence $\D$ contains noninvertible objects of $\C$. Since $\D$ is slightly degenerate and $\D_{pt}$ contains only $2$ elements, Theorem \ref{thm24} (3) shows that the stabilizer $G[X]$ is trivial for all $X\in \Irr(\C)$. Thus $\D$ can not be of type $(1,2;p,m')$ and $(1,2;q,n')$. This implies that $\D$ must be of type $(1,2;p,m';q,n')$.

(1)\quad Let $X\in \Irr_p(\D)$. Then $X\otimes X^*$ can not contain elements of $\Irr_q(\D)$ since $p^2\leq q$. This is impossible because $G[X]$ is trivial. Thus $\D_{pt}=\C_{pt}$ is a nontrivial Tannakian subcategory of $\C$.

(2)\quad If $p,q$ are powers of two distinct prime numbers, then \cite[Proposition 7.4]{etingof2011weakly} shows that $\D$ contains a nontrivial Tannakian subcategory, and so does $\C$.
\end{proof}

\begin{thm}\label{thm42}
Let $1<p<q$ be positive integers such that ${\rm gcd}(p,q)=1$, and let $\C$ be of type $(1,m;p,n;q,s)$. Then $\C$ contains a nontrivial Tannakian subcategory in any of the following cases:

(i)\quad $p^2\leq q$;

(ii)\quad $p,q$ are powers of two distinct prime numbers.
\end{thm}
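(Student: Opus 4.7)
My plan is to imitate the proof of Lemma \ref{lem41}, but since the pointed part now has arbitrary order $m$, to use Deligne decompositions of $\C$ in order to extract a fusion subcategory of type $(1,2;p,\cdot;q,\cdot)$ to which Lemma \ref{lem41} applies directly. Let $\D=\C_{pt}$; by Proposition \ref{pro22} we have $m>1$, so $\D$ is a nontrivial pointed braided fusion category, and the entire argument is driven by case analysis on the M\"uger center $\mathcal{Z}_2(\D)$.

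First I dispose of every case except $\mathcal{Z}_2(\D)\cong\SuperV$. If $\FPdim\mathcal{Z}_2(\D)>2$, Lemma \ref{lem25} produces a nontrivial Tannakian subcategory inside $\mathcal{Z}_2(\D)\subseteq\C$. If $\mathcal{Z}_2(\D)\cong\Rep(\mathbb{Z}_2)$, it is itself such a subcategory. If $\mathcal{Z}_2(\D)$ is trivial, then $\D$ is nondegenerate, so Theorem \ref{thm23}(2) gives $\C\cong\D\boxtimes\D'$; counting simples by Frobenius-Perron dimension in this Deligne product (using that $\D$ is pointed) shows $\D'$ is an integral braided fusion category of type $(1,1;p,n/m;q,s/m)$ with both noninvertible counts positive integers, contradicting Proposition \ref{pro22}. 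So this subcase does not occur.

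The main case is $\mathcal{Z}_2(\D)\cong\SuperV$, where $\D$ is slightly degenerate and pointed. By Theorem \ref{thm24}(1) write $\D=\SuperV\boxtimes\D_0$ with $\D_0$ nondegenerate pointed of Frobenius-Perron dimension $m/2$, and then Theorem \ref{thm23}(2) applied to $\D_0\subseteq\C$ yields $\C\cong\D_0\boxtimes E$, where $E=(\D_0)'$ is the M\"uger centralizer of $\D_0$ in $\C$. Since $\SuperV\subseteq\mathcal{Z}_2(\D)\subseteq\D$ centralizes $\D_0$, we have $\SuperV\subseteq E$, and counting invertibles in $\C\cong\D_0\boxtimes E$ forces $\FPdim E_{pt}=2$, hence $E_{pt}=\SuperV$. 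The analogous counts of simples of dimension $p$ and of dimension $q$ show that $E$ has type $(1,2;p,2n/m;q,2s/m)$ with both noninvertible counts positive integers (integrality of the counts is automatic from the decomposition, and positivity follows from $n,s\geq 1$). Since the hypothesis (i) or (ii) of Theorem \ref{thm42} is inherited by $E$, Lemma \ref{lem41} applies to $E$ and produces a nontrivial Tannakian subcategory of $E$, which is in turn a nontrivial Tannakian subcategory of $\C$. The main obstacle I expect is the careful bookkeeping of these Deligne decompositions --- in particular verifying that in the last case $E$ has both noninvertible dimensions populated and that $E_{pt}$ is exactly $\SuperV$ rather than something larger.
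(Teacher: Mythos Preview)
Your proof is correct and follows essentially the same route as the paper's: both set $\D=\C_{pt}$, invoke Proposition \ref{pro22} to get $\D$ nontrivial, do a case analysis on $\mathcal{Z}_2(\D)$, rule out the nondegenerate case via Theorem \ref{thm23}(2) and Proposition \ref{pro22}, and in the slightly degenerate case use Theorem \ref{thm24}(1) to split off a nondegenerate pointed factor $\D_0$ and reduce to a braided subcategory of type $(1,2;p,\cdot;q,\cdot)$ to which Lemma \ref{lem41} applies. The only cosmetic difference is that the paper separates the sub-cases $\D_0$ trivial versus nontrivial, whereas you treat them uniformly (when $\D_0$ is trivial your $E=(\D_0)'$ is all of $\C$); your more explicit bookkeeping of the simple-object counts is fine and the concerns you flag are indeed handled by the Deligne decomposition.
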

\begin{proof}
 By Proposition \ref{pro22}, $\C_{pt}$ is not trivial. Let $D=\C_{pt}$ and $\mathbb{Z}_2(D)$ be the M\"{u}ger center of $\D$.

 \medbreak
 If $\FPdim(\mathbb{Z}_2(D))>2$ then $\mathbb{Z}_2(D)$ contains a nontrivial Tannakian subcategory by Lemma \ref{lem25}.

 \medbreak
  If $\FPdim(\mathbb{Z}_2(D))=1$ then $D$ is nondegenerate. There is an equivalence of braided fusion categories $\C\cong \D\boxtimes\D'$, where $\D'$ is the M\"{u}ger centralizer of $\D$ in $\C$. Then $\D'$ is of type $(1,1;p,n;q,s)$. This is impossible by Proposition \ref{pro22}.

\medbreak
 Finally, we consider the case when $\FPdim(\mathbb{Z}_2(D))=2$ and $\mathbb{Z}_2(D)$ is equivalent to $\SuperV$. In this case, $\D$ is slightly degenerate. Then $D\cong\SuperV\boxtimes \D_0$ by Theorem \ref{thm24}(1), where $\D_0$ is a nondegenerate pointed fusion category. If $\D_0$ is trivial then $\D\cong\SuperV$ and $\C$ is of type $(1,2;p,m;q,s)$. So we are done in this case by Lemma \ref{lem41}. If $\D_0$ is not trivial then $\C\cong\D_0\otimes \D_0'$ as a braided fusion category by Theorem \ref{thm23}, where $\D_0'$ is the M\"{u}ger centralizer of $\D_0$ in $\C$. In particular, $\D_0'$ is of type  $(1,2;p,m;q,s)$. So $\D_0'$ contains a nontrivial Tannakian subcategory by Lemma \ref{lem41}.
\end{proof}

\begin{cor}\label{cor43}
Let $\C$ be a braided fusion category of type $(1,m;p,n;q,s)$, where $p<q$ are prime numbers. Then $\C$ is solvable.
\end{cor}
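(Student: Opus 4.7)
The approach is induction on $\FPdim(\C)$. If $n=s=0$ then $\C$ is pointed and hence solvable, which is the base case.

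For the inductive step I apply Theorem \ref{thm42}(ii)---distinct primes being in particular powers of distinct primes---to obtain a nontrivial Tannakian subcategory $\Rep(G) \subseteq \C$. The irreducible complex representations of $G$ embed as simple objects of $\C$, so their Frobenius--Perron dimensions, that is the character degrees of $G$, lie in $\{1,p,q\}$. Hence $|\cd(G)| \leq 3$, and Manz's theorem on character degrees of finite groups yields that $G$ is solvable. In fact, tracing through the proof of Theorem \ref{thm42} shows that in every branch except the one passing through Lemma \ref{lem41}(2) and Proposition 7.4 of \cite{etingof2011weakly} the Tannakian subcategory is already contained in $\C_{pt}$; in those branches $G$ is abelian and no character-theoretic input is needed.

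I then de-equivariantize: the category $\C_G$ is a $G$-crossed braided fusion category with $\FPdim(\C_G) = \FPdim(\C)/|G|$. Formula \eqref{eq7} shows that the Frobenius--Perron dimension of each simple object of $\C_G$ divides that of some simple object of $\C$, and since $p,q$ are prime these dimensions must lie in $\{1,p,q\}$. Consequently the neutral component $(\C_G)_e$ of the canonical $G$-grading on $\C_G$ is a braided fusion category of type $(1,m';p,n';q,s')$ whose Frobenius--Perron dimension is strictly smaller than $\FPdim(\C)$. Applying the inductive hypothesis---complemented by Corollary \ref{cor34} when only one of $n',s'$ is nonzero, and trivially when both vanish---yields solvability of $(\C_G)_e$.

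Finally, $\C_G$ is a $K$-extension of $(\C_G)_e$ for the subgroup $K \leq G$ supporting the grading, so $\C_G$ is solvable by Proposition 4.5 of \cite{etingof2011weakly}; and $\C \cong (\C_G)^G$ is then the $G$-equivariantization of a solvable category by the solvable group $G$, so the same proposition gives solvability of $\C$. The principal obstacle is the step establishing that $G$ is solvable: either one inspects the proof of Theorem \ref{thm42} carefully enough to force the Tannakian subcategory to be pointed (making $G$ abelian and the argument self-contained), or one admits Manz's character-degree theorem as an external ingredient covering the case when $\Rep(G)$ is nonpointed.
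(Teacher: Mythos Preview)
Your proof is correct and follows essentially the same route as the paper: invoke Theorem~\ref{thm42} to obtain a nontrivial Tannakian subcategory $\Rep(G)$, observe $G$ is solvable from its restricted character degrees, de-equivariantize, apply induction (together with Corollary~\ref{cor34}) to the braided neutral component $(\C_G)_e$, and conclude via \cite[Proposition~4.5]{etingof2011weakly}. The only notable difference is that the paper simply asserts ``$G$ is solvable'' from $\cd(G)\subseteq\{1,p,q\}$ without naming a reference, whereas you are more explicit---either tracing the proof of Theorem~\ref{thm42} to make $G$ abelian or invoking a character-degree theorem (the relevant result, that $|\cd(G)|\le 3$ forces solvability, is usually attributed to Isaacs rather than Manz).
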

\begin{proof}
By Theorem \ref{thm42}, $\C$ contains a nontrivial Tannakian subcategory $\Rep(G)$. Since the Frobenius-Perron dimensions of simple objects of $\Rep(G)$ are $1,p$ or $q$, we have that $G$ is solvable. We then prove the corollary by induction on $\FPdim (\C)$.

\medbreak
Let $\C_G$ be the de-equivariantization of $\C$ by $\Rep(G)$, and $(\C_G)_e$ be the neutral component of the associated $G$-grading of $\C_G$. The  Frobenius-Perron dimensions of simple objects of $\C_G$ are $1,p$ or $q$ by Equation \ref{eq7}. If $(\C_G)_e$ is of type $(1,m')$, $(1,m';p,n')$ or $(1,m';q,n')$ then $(\C_G)_e$ is solvable by Corollary \ref{cor34}. If $(\C_G)_e$ is of type $(1,m';p,n';q,s')$ then $(\C_G)_e$ is solvable by induction.  In addition, $\C_G$ is a $K$-extension of $(\C_G)_e$, where $K$ is a subgroup of $G$. Therefore, $\C$ is solvable since solvability is preserved under taking extensions and equivariantizations by  solvable groups \cite[Proposition 4.5]{etingof2011weakly}.
\end{proof}

\begin{rem}
Let $\C$ be a braided fusion category of type $(1,m;p,n;q,s)$. It is hard to determine when $\C$ is group-theoretical, even though in the simplest case when $p=2$. In fact, there exists a braided category of such type which is not group-theoretical. As we have mentioned in the Introduction, Gelaki, Naidu and Nikshych \cite[Section 5C]{gelaki2009centers} constructed a class of modular categories which is of type $(1,2;2,\frac{1}{2}(q^2-1);q,2)$. They are not group-theoretical.
\end{rem}

\section{Applications}
We will apply the results obtained so far to integral braided fusion categories, and prove the following result.

\begin{thm}\label{thm51}
Let $\C$ be an integral braided fusion category. Suppose that $\FPdim(\C)$ is odd and $\FPdim(\C)<765$. Then $\C$ is solvable.
\end{thm}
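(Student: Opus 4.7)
My plan is to proceed by induction on $\FPdim(\mathcal{C})$, with the trivial category as base. The cornerstone is the following Tannakian reduction: if $\mathcal{C}$ contains a nontrivial Tannakian subcategory $\Rep(G)$, then $|G|$ divides the odd number $\FPdim(\mathcal{C})$ and is itself odd, so $G$ is solvable by Feit--Thompson. The neutral component $(\mathcal{C}_G)_e$ of the $G$-crossed braided de-equivariantization is then an integral braided fusion category of odd dimension strictly less than $\FPdim(\mathcal{C})$, hence solvable by the inductive hypothesis. Consequently $\mathcal{C}_G$ is a $K$-extension ($K \le G$) of a solvable category and is itself solvable, and $\mathcal{C} = (\mathcal{C}_G)^G$ is solvable by \cite[Proposition~4.5]{etingof2011weakly}. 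The task therefore reduces to producing a nontrivial Tannakian subcategory in each relevant dimension, or to handling $\mathcal{C}$ directly by a prior result.

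If $\mathcal{Z}_2(\mathcal{C}) \neq \vect$, then $\FPdim(\mathcal{Z}_2(\mathcal{C}))$ is odd and Lemma~\ref{lem25} immediately yields a Tannakian subcategory, so I may assume $\mathcal{C}$ is modular. I then split by the prime factorization of $n = \FPdim(\mathcal{C})$. When $n = p^a$ is an odd prime power, the bound $\FPdim(X)^2 \mid \FPdim(\mathcal{C})$ valid for weakly integral $\mathcal{C}$ forces every simple to have $p$-power FP-dimension, and Corollary~\ref{cor34} applies. When $n = p^a q^b$ with distinct odd primes, I would invoke the Etingof--Nikshych--Ostrik solvability theorem for $p^a q^b$-dimensional fusion categories in \cite{etingof2011weakly}. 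Since $3 \cdot 5 \cdot 7 \cdot 11 = 1155 > 765$, every remaining $n$ has exactly three distinct odd prime factors; if $n = pqr$ the same ENO techniques apply, leaving only the six ``hard'' dimensions of the form $p^2 q r$, namely $315,\,495,\,525,\,585,\,693,\,735$.

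For each of these six hard dimensions my plan is to enumerate the possible types $(1, m; d_1, n_1; \ldots)$ of $\mathcal{C}$, using the equation $n = |\Pic(\mathcal{C})| + \sum \FPdim(X)^2$, the identification $\FPdim(\mathcal{C}_{pt}) = |\mathcal{U}(\mathcal{C})|$ valid in the modular case, the divisibility $|G[X]| \mid \FPdim(X)^2$, and the contrapositive of Theorem~\ref{thm32}: no single prime can divide $\FPdim(X)$ for every noninvertible simple $X$, for otherwise the already-handled Tannakian reduction applies. In each admissible configuration I would aim to show that $\mathcal{C}$ is of type $(1, m; p, n; q, s)$ and hence solvable by Corollary~\ref{cor43}, or has all simples of $p$-power FP-dimension and so is solvable by Corollary~\ref{cor34}, or satisfies $\FPdim(\mathcal{C}_{pt}) = p$ and is group-theoretical by Theorem~\ref{thm36}.

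The main obstacle will be this final enumeration. The arithmetic alone often admits several exotic candidate types in each of the six hard dimensions, and eliminating them seems to require combining the M\"uger centralizer machinery of Theorem~\ref{thm23} with Theorem~\ref{thm42} applied to suitably chosen fusion subcategories of $\mathcal{C}$. I expect $735 = 3 \cdot 5 \cdot 7^2$ to be the tightest case, since the prime-square factor $49$ squeezes the invertibles and permits several coprime noninvertible dimensions to coexist, so that the most delicate centralizer arguments are needed to collapse everything back to a type covered by Corollary~\ref{cor43}.
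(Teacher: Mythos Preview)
Your strategy is exactly the paper's: reduce to the six dimensions $315,495,525,585,693,735$ via the ENO $p^aq^b$ and $pqr$ theorems, enumerate the possible types of $\mathcal{C}$, and dispatch each surviving type by Corollary~\ref{cor34} (simple dimensions powers of one prime), Corollary~\ref{cor43} (type $(1,m;p,n;q,s)$ with $p,q$ prime), or the Tannakian reduction of Theorem~\ref{thm32} (noninvertible dimensions share a prime factor). Two corrections, though. First, the divisibility ``$\FPdim(X)^2\mid\FPdim(\mathcal{C})$ for weakly integral $\mathcal{C}$'' is not a known general fact; you should simply cite the ENO $p^aq^b$ theorem for the prime-power case and drop this step. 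Second, the modular reduction and the anticipated ``delicate centralizer arguments'' are both unnecessary: the paper carries out the enumeration without assuming nondegeneracy, using instead the constraints collected in Lemma~\ref{lem54} (notably $d_i\mid\FPdim(\mathcal{C})$, $n_1\mid\FPdim(\mathcal{C})$ and $n_1\mid n_id_i^2$, and the Ng--Schauenburg parity constraint that in odd Frobenius--Perron dimension every $n_i$ with $i\geq 2$ is even and every $d_i$ is odd). These arithmetic filters are what actually cut the list down; your modularity constraint $|\Pic(\mathcal{C})|=|\mathcal{U}(\mathcal{C})|$ alone will not. Once the filtered list is in hand, every type falls directly into one of the three cases above with no further M\"uger-centralizer work---even in dimension $735$ the only type not of shape $(1,m;p,n)$ or $(1,m;p,n;q,s)$ is $(1,15;3,30;15,2)$, which has common divisor $3$ and so is handled by Theorem~\ref{thm32}. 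Theorem~\ref{thm36} is never needed.
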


Before giving the proof of Theorem \ref{thm51}, we need some preparations. The following lemma is a special case of \cite[Corollary 8.2]{Ng200734}.

\begin{lem}\label{lem52}
Let $\C$ be a weakly integral fusion category. Suppose that there exists a nontrivial simple object $X$ of $\C$ such that $X^*\cong X$. Then $\FPdim(\C)$ is even.
\end{lem}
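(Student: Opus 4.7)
The statement is the fusion-categorical counterpart of the classical fact that a finite group of odd order has no nontrivial self-dual irreducible representation. The plan is to prove the contrapositive: if $\FPdim(\C)$ is odd, then every self-dual simple object of $\C$ is the unit.

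A naive counting argument is a natural starting point but turns out to be insufficient on its own. Since $\C$ is weakly integral, $\FPdim(X)^2 \in \mathbb{Z}$ for every simple $X$ and $\FPdim(\C) = \sum_{X \in \Irr(\C)} \FPdim(X)^2$. Grouping non-self-dual simples into duality pairs $\{X,X^*\}$ (each contributing $2\FPdim(X)^2$ to the total), one obtains
\[
\FPdim(\C) \;\equiv\; \sum_{X \cong X^*} \FPdim(X)^2 \;\equiv\; \sum_{X \cong X^*} \FPdim(X) \pmod 2.
\]
If $\FPdim(\C)$ were odd, this forces an odd number of self-dual simples of odd Frobenius-Perron dimension. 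The unit $\textbf{1}$ already contributes one such summand, so the mod-$2$ identity is consistent with the existence of additional self-dual simples and cannot by itself conclude the proof.

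To rule out further self-dual simples I would invoke the theory of Frobenius-Schur indicators for pivotal fusion categories. Since $\C$ is weakly integral, it is pseudo-unitary and carries a canonical spherical structure for which categorical and Frobenius-Perron dimensions coincide. Relative to this structure the second indicator $\nu_2(X)$ of Ng and Schauenburg is defined for every simple $X$; it vanishes unless $X\cong X^*$, and in the self-dual case $\nu_2(X)\in\{+1,-1\}$ by pseudo-unitarity. The crucial input is the Cauchy-type congruence theorem underlying \cite[Corollary~8.2]{Ng200734}, which implies that whenever $\nu_2(X)\neq 0$ for some nontrivial simple $X$ one must have $2\mid\FPdim(\C)$. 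Applied to the given self-dual $X$, this yields the desired conclusion.

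The main obstacle is the indicator machinery itself: a genuinely self-contained proof would require reconstructing enough of the Ng-Schauenburg framework (tube-algebra construction of $\nu_n$, integrality of $\nu_2$ under pseudo-unitarity, and the congruence subgroup theorem for the modular data of $\mathcal{Z}(\C)$) to extract this parity statement. For the purposes of this paper it is cleanest to cite their result directly, rather than reproducing that substantial body of theory.
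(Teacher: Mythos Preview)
Your proposal is correct and lands on exactly the same approach as the paper: the paper does not give an independent proof of this lemma but simply records it as a special case of \cite[Corollary~8.2]{Ng200734}, which is precisely the Ng--Schauenburg Frobenius--Schur indicator result you invoke. Your additional discussion (the failed parity count and the sketch of the indicator machinery) is accurate context, but the substantive argument in both cases is the citation itself.
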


Based on the above lemma, we can get the following result. The argument of its proof is
word for word as in the proof of \cite[Theorem 5.1]{kashina2002self}, so it is omitted.

\begin{lem}\label{lem53}
Let $\C$ be an integral fusion category. Suppose that there exists a nontrivial simple object $X$ of $\C$ such that $\FPdim(X)$ is even. Then $\FPdim(\C)$ is even.
\end{lem}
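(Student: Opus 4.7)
The plan is to prove the contrapositive: assuming that $\FPdim(\C)$ is odd, I will show that every nontrivial simple object of $\C$ has odd Frobenius-Perron dimension. Lemma \ref{lem52} already gives a first consequence of this assumption: no nontrivial simple object $Y$ satisfies $Y \cong Y^*$, since otherwise $\FPdim(\C)$ would have to be even. So under the hypothesis $2 \nmid \FPdim(\C)$, every nontrivial simple object in $\C$ is not self-dual.

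Next, I would fix an arbitrary nontrivial simple object $X$ and consider the decomposition
$$X \otimes X^* = \mathbf{1} \oplus \bigoplus_{Y \in \Irr(\C),\, Y \neq \mathbf{1}} m(Y, X \otimes X^*)\, Y.$$
Since $X \otimes X^*$ is self-dual as an object, multiplicities are invariant under duality: $m(Y, X \otimes X^*) = m(Y^*, X \otimes X^*)$ for every simple $Y$. Combined with the previous paragraph, the nontrivial simple constituents organize themselves into genuine pairs $\{Y, Y^*\}$ with $Y \not\cong Y^*$, each pair contributing the same multiplicity, and Frobenius-Perron dimension is preserved under duality.

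Taking Frobenius-Perron dimensions on both sides then gives
$$\FPdim(X)^2 = 1 + \sum_{\{Y, Y^*\}} 2\, m(Y, X \otimes X^*)\, \FPdim(Y),$$
where the sum ranges over the duality pairs of nontrivial simple constituents. Because $\C$ is integral, all multiplicities and all dimensions $\FPdim(Y)$ are integers, so the right-hand side is odd. Hence $\FPdim(X)^2$, and therefore $\FPdim(X)$, is odd. This establishes the contrapositive, and Lemma \ref{lem53} follows.

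The argument presents essentially no obstacle; it is a parity count once duality has been used to eliminate nontrivial self-dual simples. The only point to verify carefully is the identity $m(Y, X \otimes X^*) = m(Y^*, X \otimes X^*)$, which follows from the general fact that for any object $Z \cong Z^*$ and any simple $Y$ one has $m(Y, Z) = m(Y, Z^*) = m(Y^*, Z)$, together with the Frobenius reciprocity formulas recorded after Equation \ref{eq1}.
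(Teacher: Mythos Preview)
Your proof is correct and follows essentially the same argument as the one the paper invokes: the paper omits the proof of Lemma~\ref{lem53} and refers verbatim to \cite[Theorem~5.1]{kashina2002self}, whose proof is exactly the parity count on $X\otimes X^*$ that you wrote out, using Lemma~\ref{lem52} to exclude nontrivial self-dual simple constituents.
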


The following lemma is essential in the proof of Theorem \ref{thm51}.

\begin{lem}\label{lem54}
Let $\C$ be an integral braided fusion category.

(1)\quad If $d_i$ $(2\leq i\leq s)$ does not divide $\FPdim(\C)$ then $\C$ cannot be of type
$(1,n_1;d_2,n_2;\cdot\cdot\cdot;d_s,n_s)$.

(2)\quad If  $\FPdim(\C)$ is odd then $\C$ cannot be of type
$(1,n_1;d_2,n_2;\cdots;d_s,n_s)$, where there exists
$i\in \{2,\cdots,s\}$ such that $n_i$ is odd or $d_i$ is even.

(3)\quad If $n_1$ does not divide $\FPdim(\C)$ or $ n_id_i^2$ then $\C$ cannot be of type
$(1,n_1;d_2,n_2;\cdots;d_s,n_s)$.

(4)\quad If  $\FPdim(\C)$ is odd then $\C$ cannot be of type
$(1,m;3,n;\cdots)$, where $m$ is not divisible by $3$.

(5)\quad If  $\FPdim(\C)$ is odd and $3+9n$ does not divide $\FPdim(\C)$ then $\C$ cannot be of type $(1,3;3,n;\cdots)$.

(6)\quad If  $\FPdim(\C)$ is odd, $\C$ does not have simple objects
of dimension $9$ and $m+9n$ does not divide $\FPdim(\C)$ then $\C$
cannot be of type $(1,m;3,n;\cdots)$.

(7)\quad If  $\FPdim(\C)$ is odd, $\C$ does not have simple objects
of dimension $3,7$ and $5$ does not divide $m$ then $\C$
cannot be of type $(1,m;5,n;\cdots)$.

(8)\quad If $t$ does not divide $m$ then $\C$ can not be of type
$(1,m;t,n)$.


(9)\quad If $\C$ is of type $(1,n_1;d_2,n_2;d_3,n_3)$ then $n_1>1$.
\end{lem}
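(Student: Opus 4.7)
My plan is to prove statement (9) by contradiction: assume $n_1 = 1$, so that $\C_{pt}$ is trivial, and derive a contradiction. The key observation is that the two noninvertible Frobenius-Perron dimensions $d_2$ and $d_3$ are positive integers greater than $1$, so they either are coprime or share a common prime divisor, and each of these cases is already handled by an earlier result in the paper.

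In the first case, suppose $\gcd(d_2, d_3) = 1$. Then, taking $p = d_2$, $q = d_3$, $m = n_1$, $n = n_2$, $s = n_3$, the hypotheses of Proposition \ref{pro22} are met verbatim (note that Proposition \ref{pro22} requires neither primality of $p, q$ nor a braiding on $\C$). Its conclusion $m > 1$ gives $n_1 > 1$, contradicting $n_1 = 1$.

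In the second case, suppose $\gcd(d_2, d_3) > 1$, and pick any prime $p$ dividing this gcd. Then $p$ divides the Frobenius-Perron dimension of every noninvertible simple object of $\C$, since every such dimension equals $d_2$ or $d_3$. The braided hypothesis on $\C$ now allows us to invoke Theorem \ref{thm32}, which produces a nontrivial Tannakian subcategory inside $\C_{pt}$; in particular $\FPdim(\C_{pt}) \geq 2$, and so $n_1 \geq 2$, again contradicting $n_1 = 1$.

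There is no substantive obstacle here: the gcd dichotomy cleanly dispatches the problem to Proposition \ref{pro22} and Theorem \ref{thm32}. The only subtlety is recognizing at the outset that these two results together exhaust all possibilities for the pair $(d_2, d_3)$, and that the braided hypothesis on $\C$ is precisely what unlocks Theorem \ref{thm32} in the second case.
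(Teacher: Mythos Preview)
Your proof is correct and follows the same gcd dichotomy the paper uses: Proposition~\ref{pro22} handles the coprime case, and the common-prime-divisor case is handled separately. The only difference is in that second case: the paper cites Lemma~\ref{lem21} directly, which already yields $p \mid |\Pic(\C)| = n_1$ and hence $n_1 \geq p > 1$, with no braiding required. You instead invoke Theorem~\ref{thm32}, which is a stronger conclusion (a nontrivial Tannakian subcategory) and genuinely needs the braiding---but note that Lemma~\ref{lem21} is literally the first sentence of the proof of Theorem~\ref{thm32}, so you are routing through a deeper result when its opening step already suffices. Both arguments are valid; the paper's is more economical and, as a bonus, shows that part~(9) in fact holds for any integral fusion category, braided or not.
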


\begin{proof}
Part (1) follows from \cite[Theorem 2.11]{etingof2011weakly}. Parts (2)--(8) follow from \cite[Lemma 5.2]{dong2012further}. In fact, the proof of \cite[Lemma 5.2]{dong2012further} only uses the properties of the Grothendieck ring of a semisimple Hopf algebra and Lemma \ref{lem52}, \ref{lem53} (in the semisimple Hopf algebra setting). Therefore, the proof of \cite[Lemma 5.2]{dong2012further}  also works  \textit{mutatis mutandis} in the integral fusion category setting. Part (9) follows from Lemma \ref{lem21} and Proposition \ref{pro22}.
\end{proof}

\begin{proof}[Proof of Theorem \ref{thm51}]
Let $p,q$ and $r$ be distinct prime numbers. Integral fusion categories of Frobenius-Perron dimension $pqr$ are classified in \cite{etingof2011weakly}. These fusion categories are group-theoretical. In the same paper, integral fusion categories of Frobenius-Perron  dimension $p^mq^n$ are proved solvable, where $m,n$ are non-negative integers. Therefore, it suffices to consider the case that $\FPdim(\C)= 315,495,525,585,693$ and $735$.

It is easy to write a computer program by which one finds out all possible positive integers $1=d_1,d_2,\cdots,d_s$ and $n_1,n_2,\cdots,n_s$ such that
$\FPdim(\C)=\sum_{i=1}^{s}n_id_i^2$, and then one can eliminate those which
can not be types of $\C$ by using Lemma \ref{lem54}. Hence, we can get the following data.

If $\FPdim(\C)=315$ then $\C$ is of one of the following types:
\begin{align*} &(1,63;3,28), (1,45;3,30), (1,9;3,34), (1,15;5,12), (1,21;7,6),
\\ &(1,3;3,2;7,6), (1,9;3,16;9,2).
\end{align*}

If $\FPdim(\C)=495$ then $\C$ is of one of the following types:
\begin{align*}&(1,9;3,54), (1,45;3,50), (1,99;3,44), (1,45;5,18), (1,11;11,4),\\  &(1,9;9,6), (1,9;3,18;9,4), (1,9;3,36;9,2),\\
&(1,9;3,4;5,18), (1,15;3,20;5,12),\\
&(1,45;15,2), (1,9;3,4;15,2).
\end{align*}

If $\FPdim(\C)=525$ then $\C$ is of one of the following types:
\begin{align*}&(1,21;3,56),(1,75;3,50), (1,3;3,58), (1,175;5,14), (1,75;5,18),\\ &(1,25;5,20),(1,35;7,10),\\ &(1,3;3,8;5,18),\\
&(1,75;15,2), (1,25;5,2;15,2), (1,3;3,8;15,2).
\end{align*}

If $\FPdim(\C)=585$ then $\C$ is of one of the following types:
\begin{align*}&(1,117;3,52), (1,9;3,64), (1,45;3,60),
(1,9;3,10;9,6),(1,9;3,28;9,4),\\ & (1,9;3,46;9,2).
 \end{align*}

If $\FPdim(\C)=693$ then $\C$ is of one of the following types:
\begin{align*}&(1,9;3,76), (1,63;3,70), (1,99;3,66), (1,7;7,14)(1,9;3,4;9,8),\\ &(1,9;3,22;9,6), (1,9;3,40;9,4), (1,9;3,58;9,2),\\
&(1,21;3,42;7,6).
\end{align*}

If $\FPdim(\C)=735$ then $\C$ is of one of the following types:
\begin{align*} &(1,15;3,80), (1,105;3,70), (1,35;5,28), (1,147;7,12),(1,49;7,14),\\&(1,245;7,10), \\
&(1,3;3,16;7,12), (1,21;3,14;7,12), (1,15;3,30;5,18),\\
&(1,15;3,30;15,2).
\end{align*}

 Now, there are three possibilities: if the Frobenius-Perron dimensions of simple objects of $\C$ are powers of a prime then $\C$ is solvable by Corollary \ref{cor34}; if the Frobenius-Perron dimensions of simple objects of $\C$ are two prime numbers then $\C$ is solvable by Corollary \ref{cor43}; if the Frobenius-Perron dimensions of simple objects of $\C$ has a common prime divisor then $\C$ has a nontrivial Tannakian subcategory $\E=\Rep(G)$, where $G$ is an Abelian group by Theorem \ref{thm32}. In the last case, $\C$ is a $G$-equivariantization of a fusion category $C_G$  and $C_G$ is a $K$-extension of a braided fusion category $(C_G)_e$, where $K$ is a subgroup of $G$. Counting dimension, $(C_G)_e$ is solvable. Therefore, $\C$ is solvable by \cite[Proposition 4.5]{etingof2011weakly}. This finishes the proof.
\end{proof}

\section{Acknowledgements}
The paper was written during the first author visited University of Southern California. He greatly appreciates Professor Susan Montgomery and the Department of Mathematics of University of Southern California for their warm hospitality and wonderful office environment. His  visit is financially supported by the China Scholarship Council and  Nanjing Agricultural University. The authors are grateful to Huali Huang, Gongxiang Liu and Sonia Natale for valuable discussions on fusion categories and finite groups. The authors' research is supported by the Natural Science Foundation of China (11201231).

\end{document}